\documentclass[12pt]{amsart}
\usepackage{latexsym}
\usepackage{amssymb}
\usepackage{amsmath}


\baselineskip = 12 pt
\parindent=20pt
\jot=6pt

\newtheorem{Thm}[subsection]{Theorem}
\newtheorem{Lemma}[subsection]{Lemma}
\newtheorem{Rem}[subsection]{Remark}
\newtheorem{Def}[subsection]{Definition}
\newtheorem{Cor}[subsection]{Corollary}
\numberwithin{equation}{section}

\newcommand{\ds}{\displaystyle}

\newcommand{\ben}{\begin{enumerate}}
\newcommand{\een}{\end{enumerate}}

\newcommand{\bec}{\begin{center}}
\newcommand{\eec}{\end{center}}

\newcommand{\beq}{\begin{equation}}
\newcommand{\eeq}{\end{equation}}

\newcommand{\bdm}{\begin{displaymath}}
\newcommand{\edm}{\end{displaymath}}

\newcommand{\R}{\mathbb{R}}

\newcommand{\N}{\mathbb{N}}

\newenvironment{proofof}[1]{{\sc Proof of #1}}{\quad\lower0.05cm\hbox{$\square$}\medskip}

\newcommand{\bgp}{\bigskip}

\title[Schr\"{o}dinger's Equation and the Balayage]
{Positive Solutions to Schr\"{o}dinger's Equation and the Exponential Integrability of the Balayage}

\author{Michael Frazier}
\address{Mathematics Department, University of Tennessee, Knoxville,
Tennessee 37922} \email{frazier@math.utk.edu}

\author{Igor E. Verbitsky}
\address{Department of Mathematics, University of Missouri, Columbia, Missouri 65211}
\email{verbitskyi@missouri.edu}

\thanks{The second author is supported in part
by NSF grant DMS-1161622.}

\subjclass[2010]{Primary 42B20, 60J65. Secondary 81Q15}

\keywords{Schr\"{o}dinger equation,  very weak solutions, balayage, Carleson measures, BMO}


\begin{document}

\begin{abstract}
Let $\Omega \subset \R^n$, for $n \geq 2$, be a bounded $C^2$ domain. Let $q \in L^1_{loc} (\Omega)$ with $q \geq 0$.   We give necessary
conditions and matching sufficient conditions, which differ only in the
constants involved, for the existence 
 of very weak solutions to
the boundary value problem $(-\triangle  -q) u =0,  \, \,  u\ge 0 \, \,   \text{on} \,  \, \Omega, \,  u=1  \, \text{on} \, \, \partial \Omega$, and  the related nonlinear problem with quadratic growth in the gradient, $-\triangle u = |\nabla u|^2 + q \,   \text{on} \,  \Omega, \,  u=0 \, \,  \text{on} \, \,  \partial \Omega$.  
We also obtain precise pointwise estimates of solutions up to the boundary. 

A crucial role is played by 
a new ``boundary condition'' on $q$ which is expressed in terms of the exponential integrability on $\partial \Omega$ of the balayage of the measure 
$\delta  q  \,  dx$, 
where $\delta (x) = \text{dist} (x, \partial \Omega)$. This condition is sharp, and 
appears in such a context for the first time. It holds, for example, if $\delta q \, dx$ is a Carleson measure in $\Omega$, or if its balayage  is in $BMO(\partial \Omega)$, with sufficiently small norm. This solves an open problem posed in the literature. 

\end{abstract}

\maketitle \vfill

\eject

\tableofcontents

\section{Introduction}

Let $n \geq 2$ and let $\Omega$ be a bounded $C^2$ domain in $\R^n$.  Let $q$ be a non-negative, locally integrable function on $\Omega$. 
 Our main results give conditions for the existence of positive solutions of the following two problems fundamental to the mathematical theory of the Schr\"{o}dinger operator
$-\triangle  - q$ (see, e.g., \cite{CZ} for $q$ in Kato's class):  

\begin{equation}\label{u0equation} 
\left\{ \begin{aligned}
-\triangle u & = q u +1, \, \,  & u \ge 0 \quad &\mbox{in} \, \,  \Omega, \\
u & = 0 \, \, &\mbox{on} \, \,  \partial \Omega,
\end{aligned}
\right.  
\end{equation} 

\begin{equation}\label{dirichlet} 
\left\{ \begin{aligned}
-\triangle u & = q u, \, \, & u \ge 0 \quad  &\mbox{in} \, \,  \Omega, \\
u & = 1 \, \, &\mbox{on} \, \,  \partial \Omega.
\end{aligned}
\right.  
\end{equation} 

For \eqref{dirichlet}, we also obtain results for $\Omega = \R^n$, $n \ge 3$.   

Our results solve an open problem on the existence of solutions to \eqref{dirichlet}, as well as the corresponding nonlinear problem \eqref{nonlineareqn-1}  with 
quadratic growth in the gradient  discussed below, which was posed in 1999 in \cite{HMV}. 

Equations \eqref{u0equation} and \eqref{dirichlet} have formal solutions as follows.  Let $G(x,y)$ be the Green's function on $\Omega$ associated with the Laplacian $-\triangle$.  Let $G$ denote the corresponding Green's potential operator:
\begin{equation}
  Gf (x) = \int_{\Omega} G(x,y) f(y) \, dy,   \,\,\, x \in \Omega. \label{defngreenpot}
\end{equation}
Let $G_1 =G$ and define $G_j$ inductively for $j \geq 2$ by
\begin{equation}
 G_j (x,y) = \int_{\Omega} G_{j-1} (x,z) G(z,y) \, q(z) \, dz. \label{defGj}
\end{equation}

We define the minimal Green's function associated with the Schr\"{o}dinger operator $-\triangle - q$ to be   
\begin{equation}
 \mathcal{G} (x,y) = \sum_{j=1}^{\infty} G_j (x,y).  \label{defGreenfcnSchr}
 \end{equation}  
The corresponding Green's operator is 
\[  \mathcal{G}f(x)  = \int_{\Omega} \mathcal{G}(x,y) f(y) \, dy .\] 
We let 
\[ u_0 (x) =  \mathcal{G}1(x) = \int_{\Omega} \mathcal{G}(x,y)  \, dy  \] 
and 
\begin{equation}\label{u1}
u_1 (x) = 1 + \mathcal{G} q (x) = 1 + \int_{\Omega} \mathcal{G} (x,y) \, q(y) \, dy .  
\end{equation}
Then $u_0$ is a formal solution of (\ref{u0equation}) and $u_1$, called the Feynman-Kac gauge in \cite{CZ}, is a formal solution of (\ref{dirichlet}).  The main issue is whether these formal solutions are finite a.e., and consequently solve the corresponding boundary value problems in a certain 
generalized sense.   Problem (\ref{dirichlet}) is more delicate than (\ref{u0equation}) because we must estimate $\mathcal{G} q$ for (\ref{dirichlet}) instead of $\mathcal{G}1$ for (\ref{u0equation}).   

We emphasize that our only {\em a priori} assumptions on the potential $q$ are that $q \in L^1_{loc} (\Omega)$ and $q \geq 0$.  For potentials in Kato's class, both $u_0$ and $u_1$ are finite a.e. if and only if the spectrum of the Schr\"odinger operator is positive on $L^2(\Omega)$, or equivalently (\ref{impliesnormcond2}) below holds for some $\beta \in (0,1)$. In that case, $u_0$ and $u_1$ are uniformly bounded by positive constants both from above and below. This is a consequence of the so-called Gauge Theorem (see, e.g.,  \cite{CZ}), which is no longer true for the general classes of potentials considered in this paper. 

Let $\delta (x) = $ dist $(x, \partial \Omega)$, for $x \in \Omega$.  Let $C^{\infty}_0 (\Omega)$ be the class of $C^{\infty}$ functions with compact support in $\Omega$, and let $L^{1,2}_0 (\Omega)$ be the closure of $C^{\infty}_0 (\Omega)$ with respect to the Dirichlet norm
\[  \Vert f \Vert_{L^{1,2}_0 (\Omega)} = \Vert \nabla f \Vert_{L^2 (\Omega, dx)} .  \]

\begin{Thm}\label{u0thm}
Suppose $\Omega$ is a bounded $C^2$ domain in $\R^n$, for $n \geq 2$, 
and $q \in L^1_{loc}(\Omega)$, $q\ge 0$.  

(i) Suppose there exists $\beta \in (0,1)$ such that
\begin{equation}\label{impliesnormcond2}
\int_{\Omega} h^2 q \, d x \leq \beta^2 \int_{\Omega} \left|\nabla h \right|^2 \, dx  \,\,\, \mbox{for all} \,\,\, h \in C^{\infty}_0 (\Omega).
\end{equation}
Then $u_0 = \mathcal{G} 1 \in L^1 (\Omega, \, dx) \cap L^1 (\Omega, \delta q \, dx)\cap L^{1,2}_0 (\Omega)$, $u_0$ is a positive weak solution of (\ref{u0equation}), and there exist constants $C>0$ depending only on $\Omega$ and $\beta$, and $C_1>0$ depending only on $\Omega$, such that 
\begin{equation}\label{u0upperestimate}
u_0 (x) \leq C_1 \delta (x) e^{C \frac{ G(\delta q)(x)}{\delta (x)}}, \,\,\, \mbox{for all} \,\,\, x \in \Omega.
\end{equation}   

(ii) Conversely, if (\ref{u0equation}) has 
a positive very weak solution $u$, then (\ref{impliesnormcond2})
holds with $\beta =1$ and there exist positive constants $c>0$ and $c_1 >0$ depending only on $\Omega$ such that 
\begin{equation}
 u (x) \geq  c_1 \delta (x) e^{c \frac{ G(\delta q)(x)}{\delta (x)}}  \,\,\, \mbox{for a.e.} \,\,\, x \in \Omega.  \label{u0lowerestimate}
\end{equation} 
 
\end{Thm}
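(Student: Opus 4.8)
For part (i) I would argue in two stages. \emph{Stage~1 (function-space membership and the weak formulation).} Put $q_k=\min(q,k)$; each $q_k$ is bounded, hence in Kato's class, and \eqref{impliesnormcond2} holds for $q_k$ with the same $\beta$ since $q_k\le q$, so $-\triangle-q_k$ has positive spectrum and, by standard theory for bounded potentials (the Gauge Theorem, cf.\ \cite{CZ}), $u_0^{(k)}:=\mathcal{G}_k 1$ (the minimal Green's operator of $-\triangle-q_k$ applied to $1$) is a bounded positive weak solution of \eqref{u0equation} with $q$ replaced by $q_k$. Testing that equation with $u_0^{(k)}\in L^{1,2}_0(\Omega)$ and using \eqref{impliesnormcond2} (extended to $L^{1,2}_0(\Omega)$ by density) gives
\[
\int_\Omega |\nabla u_0^{(k)}|^2\,dx=\int_\Omega q_k\,(u_0^{(k)})^2\,dx+\int_\Omega u_0^{(k)}\,dx\le\beta^2\int_\Omega|\nabla u_0^{(k)}|^2\,dx+\int_\Omega u_0^{(k)}\,dx,
\]
so by Poincar\'e's inequality $\|\nabla u_0^{(k)}\|_{L^2}\le C(\Omega)/(1-\beta^2)$ uniformly in $k$. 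Since $q_k\uparrow q$ forces each $G_j$, and hence $u_0^{(k)}$, to increase to the corresponding object built from $q$, we get $u_0^{(k)}\uparrow u_0=\mathcal{G}1$; thus $u_0\in L^{1,2}_0(\Omega)$ and, by Fatou, $\int_\Omega u_0^2\,q\le\beta^2\|\nabla u_0\|_{L^2}^2<\infty$. Because $u_0\ge G1\asymp\delta$ on a bounded $C^2$ domain, this yields $\int_\Omega \delta q\,u_0\lesssim\int_\Omega u_0^2\,q<\infty$, i.e.\ $u_0\in L^1(\Omega,\delta q\,dx)$; membership in $L^1(\Omega,dx)$ is automatic. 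Passing to the limit in the weak formulation (strong $L^2$ convergence of gradients; $q_k u_0^{(k)}+1\uparrow qu_0+1$ against a fixed $\ffi\in C^\infty_0(\Omega)$, with $qu_0\ffi\in L^1$ since $q\in L^1_{loc}$ and $u_0^2q\in L^1$) shows $u_0$ is a positive weak solution of \eqref{u0equation}.

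\emph{Stage~2 (the pointwise upper bound)} is the heart of the matter. Write $F:=G(\delta q)/\delta$. The plan is to estimate the terms of the Neumann series $u_0=\sum_{j\ge1}G_j1$ and to sum. From $G1\asymp\delta$ one gets $G_21=G(q\,G1)\asymp\delta F$, and inductively $G_{j+1}1=G(q\,G_j1)$, so everything is governed by the action of $f\mapsto G(qf)$ on functions of the form $\delta\,F^{k}$. The natural tool is the $3G$ inequality $G(x,z)G(z,y)\lesssim G(x,y)\big(\tfrac{\delta(z)}{\delta(x)}G(x,z)+\tfrac{\delta(z)}{\delta(y)}G(z,y)\big)$, which rewrites such estimates in terms of $F$; passing to the normalized kernel $\widetilde G(x,y)=G(x,y)/(\delta(x)\delta(y))$ and the measure $\lambda=\delta^2q\,dx$ (so that $\widetilde G\lambda=F$), the whole induction is equivalent to a self-improving iterated-potential inequality $\int_\Omega\widetilde G(x,z)F(z)^{m-1}\,d\lambda(z)\le (C/m)\,F(x)^m$, which one would obtain from a truncated-potential bound of the shape $\widetilde G(\lambda|_{\{F>t\}})(x)\lesssim(F(x)-ct)_+$ together with the layer-cake identity $F^{m-1}=(m-1)\int_0^F t^{m-2}\,dt$ --- the $t$-integration is exactly what produces the factorial. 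The target is the term-by-term bound $G_j1(x)\le C_0\,\delta(x)(CF(x))^{j-1}/(j-1)!$, which upon summation gives $u_0(x)\le C_1\,\delta(x)\,e^{CF(x)}$, i.e.\ \eqref{u0upperestimate}. \emph{This is the main obstacle}: a naive $3G$ iteration is circular (it dominates the target by $C\,\delta\,F^{j}/(j-1)!$ plus a fixed multiple of itself), and to close the argument one must also exploit the form-boundedness \eqref{impliesnormcond2} --- its constant $\beta^2<1$ is what controls the recursive, non-local part of the iteration and accounts for the $\beta$-dependence of $C$ in \eqref{u0upperestimate} --- together with the precise two-sided bounds for $G$ up to $\partial\Omega$, for which the $C^2$ regularity (control of $\delta$, $|\nabla\delta|$, $\triangle\delta$ near the boundary) is used. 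A direct supersolution/comparison argument with $C_1\delta\,e^{CG(\delta q)/\delta}$ is tempting but introduces $|\nabla G(\delta q)|^2$, for which only $L^2$ and not pointwise control is available, so the series-plus-$3G$ route seems the right one.

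For part (ii), let $u>0$ be a very weak solution of \eqref{u0equation}. Since $-\triangle u=qu+1\ge0$, $u$ is a nonnegative superharmonic function vanishing on $\partial\Omega$, so by the Riesz representation $u=G\big((qu+1)\,dx\big)=G1+G(qu)$; iterating, $u\ge\sum_{j=1}^{N}G_j1$ for every $N$, hence $u\ge u_0=\mathcal{G}1$ (so the minimal solution is genuinely minimal, and it suffices to bound $u_0$ from below). The form inequality \eqref{impliesnormcond2} with $\beta=1$ comes from the ground-state substitution: for $h\in C^\infty_0(\Omega)$, $u\ge G1$ is bounded below on $\supp h$, so $h^2/u$ is an admissible test function and
\[
\int_\Omega q\,h^2\le\int_\Omega(qu+1)\,\frac{h^2}{u}=\int_\Omega\nabla u\cdot\nabla\!\Big(\frac{h^2}{u}\Big)=\int_\Omega\Big(2\,\frac{h}{u}\,\nabla h\cdot\nabla u-\frac{h^2}{u^2}\,|\nabla u|^2\Big)\le\int_\Omega|\nabla h|^2,
\]
using $2\frac{h}{u}\nabla h\cdot\nabla u\le|\nabla h|^2+\frac{h^2}{u^2}|\nabla u|^2$ (the local $W^{1,2}$ regularity needed to justify this for a merely very weak solution is supplied by a standard truncation/approximation argument, using $-\triangle u\ge0$). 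Finally, \eqref{u0lowerestimate} is obtained by running Stage~2 in reverse: one shows $G_j1(x)\ge c_0\,\delta(x)(cF(x))^{j-1}/(j-1)!$ by restricting the iterated integrals to Whitney-type subregions and using the lower bounds for $G$ together with a lower truncated-potential inequality dual to the one above, and sums to get $u(x)\ge u_0(x)\ge c_1\,\delta(x)\,e^{cF(x)}$ a.e. As in part (i), the sharp exponential --- here the lower truncated-potential estimate --- is the main obstacle, whereas the comparison $u\ge u_0$ and the derivation of \eqref{impliesnormcond2} are routine.
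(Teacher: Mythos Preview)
Your Stage~2 correctly locates the crux --- the exponential upper bound \eqref{u0upperestimate} --- and you are right that a naive 3G iteration is circular and that both the quasi-metric structure and the spectral condition $\beta<1$ must enter. But you do not close the argument. The truncated-potential bound $\widetilde G(\lambda|_{\{F>t\}})(x)\lesssim(F(x)-ct)_+$ is asserted, not proved, and as stated it makes no reference to $\beta$; if it held unconditionally it would give factorial decay of $G_j1$ and hence the exponential bound for \emph{every} $q\ge0$, which is false (the Neumann series diverges when $\|T\|>1$ while $F=G(\delta q)/\delta$ can remain finite a.e.). So either that bound itself must depend on $\beta$, or the combination with the norm condition must be organized differently --- and this is exactly the nontrivial content you have not supplied. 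The paper does not attempt it either: it invokes Corollary~3.5 of \cite{FNV} as a black box. One first shows (Lemma~\ref{leastconst}) that \eqref{impliesnormcond2} is equivalent to $\|T\|_{L^2(q\,dx)\to L^2(q\,dx)}=\beta^2<1$, and that $G$ is quasi-metrically modifiable with modifier $m=\delta$ via the two-sided Green estimates \eqref{green-smootha}--\eqref{green-smoothb}; then \cite{FNV} delivers $m\,e^{c(Tm)/m}\le v_0\le m\,e^{C(Tm)/m}$ for $v_0=m+\sum_{j\ge1}T^jm$, and since $u_0=G1+\sum_{j\ge1}T^j(G1)$ with $G1\approx\delta$, both \eqref{u0upperestimate} and \eqref{u0lowerestimate} follow at once. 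Your lower-bound sketch in (ii) (``running Stage~2 in reverse'') is likewise covered by the lower half of the same \cite{FNV} estimate, which holds without any norm assumption.

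Your Stage~1 approximation by $q_k=\min(q,k)$ is a legitimate alternative, though the paper's route is shorter: it observes directly that $G1\in L^{1,2}_0(\Omega)\subset L^2(\Omega,q\,dx)$ by \eqref{dual-Sob11} and \eqref{impliesnormcond2}, sums $u_0=\sum_{j\ge0}T^j(G1)$ in $L^2(q\,dx)$ using $\|T\|<1$, and then reads off $\int_\Omega|\nabla u_0|^2=\int_\Omega(u_0^2q+u_0)<\infty$ from $u_0=G(u_0q+1)$ and \eqref{dual-Sob11}. No Gauge Theorem, no limit passage.

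In part (ii) your derivation of \eqref{impliesnormcond2} with $\beta=1$ via ground-state substitution is more fragile than the paper's. Testing with $h^2/u$ requires $u\in W^{1,2}_{loc}$ to make sense of $\nabla u$, which is not part of the definition of a very weak solution and is not a one-line consequence of $-\triangle u\ge0$. The paper bypasses this entirely: from the integral equation $u=Tu+G1$ one has $0<u<\infty$ a.e.\ and $Tu\le u$, so Schur's test for positive-kernel integral operators gives $\|T\|_{L^2(q\,dx)}\le1$ immediately, and Lemma~\ref{leastconst} converts this back to \eqref{impliesnormcond2}.
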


It is easy to see, as a consequence of Theorem \ref{u0thm}, that an analogue of part
(i) of Theorem~\ref{u0thm} with $u_0 = \mathcal{G} f$ 
holds for any bounded measurable function $f$ in place of the
function $1$ on the right hand side of (\ref{u0equation}), and part (ii) for any measurable function $f$ 
bounded below by a positive constant.

Condition (\ref{impliesnormcond2}) was studied originally  by V. G. Maz'ya  for general open sets $\Omega \subset \R^n$ and Borel measures $d \omega$ in 
place of $q dx$, and characterized in terms of capacities associated with 
$L^{1,2}_0 (\Omega)$ (see \cite{M}, Sec. 2.5.2). 

For equation (\ref{u0equation}), the existence of a weak solution under assumption (\ref{impliesnormcond2})  follows by well-known techniques (see, e.g., \cite{DD}).  Also the lower estimate (\ref{u0lowerestimate}) in Theorem \ref{u0thm} is known (see \cite{GrV} and the literature cited there).  What is new here is the upper estimate (\ref{u0upperestimate}), whose proof relies on results in \cite{FNV}.  This upper estimate is, in turn, critical for our results regarding (\ref{dirichlet}).  The more difficult nature of (\ref{dirichlet}) compared to (\ref{u0equation}) is exhibited in our results in two ways: we must consider solutions of (\ref{dirichlet}) in the ``very weak" sense (see Definitions \ref{defveryweak} and \ref{defveryweak2}), and, most importantly, a new condition (\ref{balayagecond}) that controls the behavior of $q$ near $\partial \Omega$ is needed for (\ref{dirichlet}) but not for (\ref{u0equation}).

Let $P(x,y)$ be the Poisson kernel for $\Omega$, and let $P^*$ denote the balayage operator 
(formally adjoint to the Poisson integral) defined by
\[ P^* f (y) = \int_{\Omega} P(x,y) f(x) \, dx , \quad y \in \partial \Omega .\]  
Let $d\sigma$ be surface measure on $\partial \Omega$.

\begin{Thm}\label{nonlinthm}
Suppose $\Omega$ is a bounded $C^2$ domain in $\R^n$, for $n \geq 2$, 
and $q \in L^1_{loc}(\Omega)$, $q\ge 0$.   

(i) Suppose there exists $\beta \in (0,1)$ such that
(\ref{impliesnormcond2}) holds and
\begin{equation}\label{balayagecond}
\int_{\partial \Omega} e^{C P^* (\delta q)} \, d \sigma < +\infty
\end{equation}
where $C$ is the constant in (\ref{u0upperestimate}).  Then $u_1 = 1 + \mathcal{G} q$ is a positive very weak solution of (\ref{dirichlet}) with
\begin{equation} \label{u1L1ineq}
 \Vert u_1 \Vert_{L^1 (\Omega, \, dx)} \leq |\Omega| + C_1  \int_{\Omega} e^{C P^* (\delta q)} \, d \sigma ,    
\end{equation}
for some constant $C_1 >0$ depending only on $\Omega$.  Also, there exist positive constants $C_2, C_3$ depending only on $\Omega$ and $\beta$ such that 
\begin{equation} \label{pointwiseestu1}
u_1 (x) \leq C_2 \int_{\partial \Omega} e^{C_3 \int_{\Omega} G(x,y) \frac{P(y,z)}{P(x,z)} q(y) \, dy    } P(x,z) \, d \sigma (z), \,\,\, \mbox{for all} \,\,\,  x \in \Omega.
\end{equation}

(ii)  Conversely, if (\ref{dirichlet}) has a positive very weak solution $u$, then (\ref{impliesnormcond2})
holds with $\beta =1$, (\ref{balayagecond}) holds with the same constant $c$ as in (\ref{u0lowerestimate}), and 
\begin{equation} \label{u1L1ineqconv}
  \int_{\Omega} e^{c P^* (\delta q)} \, d \sigma \leq C_4 \left( \Vert u \Vert_{L^1 (\Omega, \, dx)} + |\partial \Omega| \right) , 
\end{equation}
for some constant $C_4>0$ depending only on $\Omega$.  Moreover, there exist positive constants $c_1, c_2 $ depending only on $\Omega$ such that
\begin{equation} \label{pointwiseestu}
u(x) \geq c_1 \int_{\partial \Omega} e^{c_2 \int_{\Omega} G(x,y) \frac{P(y,z)}{P(x,z)} q(y) \, dy    } P(x,z) \, d \sigma (z) , \,\,\, \mbox{for all} \,\,\,  x \in \Omega.
\end{equation}
  
\end{Thm}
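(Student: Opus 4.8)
The plan is to prove part~(i) by reducing the upper estimate \eqref{pointwiseestu1} and the $L^1$ bound \eqref{u1L1ineq} for $u_1$ to the already-proved estimate \eqref{u0upperestimate} for $u_0$, applied not to $-\triangle$ but to the family of Doob $h$-transforms of $-\triangle-q$ with $h=P(\cdot,z)$, $z\in\partial\Omega$; part~(ii) is then obtained along the same lines with all inequalities reversed. Two elementary identities drive the argument. First, writing $\mathcal G$ for the operator with kernel \eqref{defGreenfcnSchr} and $G$ for the Green operator of $-\triangle$, iteration of \eqref{defGj} gives $\mathcal G=\sum_{k\ge 0}\big(G(q\,\cdot)\big)^kG$, hence $u_1=1+\mathcal Gq=\sum_{k\ge 0}\big(G(q\,\cdot)\big)^k1=\big(I-G(q\,\cdot)\big)^{-1}1$. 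Since $\int_{\partial\Omega}P(\cdot,z)\,d\sigma(z)\equiv1$ in $\Omega$, applying this to each $P(\cdot,z)$ in place of $1$ yields the representation
\[
u_1(x)=\int_{\partial\Omega}v_z(x)\,d\sigma(z),\qquad v_z(x)=P(x,z)+\mathcal G\big(q\,P(\cdot,z)\big)(x),
\]
where $v_z$ is the Schr\"odinger--Poisson kernel, i.e.\ the very weak solution of $-\triangle v_z=qv_z$ in $\Omega$ with boundary data the unit point mass at $z$; so it suffices to estimate $v_z$ for each fixed $z$, with constants independent of $z$. Second, for $h=P(\cdot,z)$ the ground state transformation is \emph{exact}: $\int_\Omega h^2\,|\nabla(f/h)|^2\,dx=\int_\Omega|\nabla f|^2\,dx$ for every $f\in C^\infty_0(\Omega)$, so hypothesis \eqref{impliesnormcond2} for $-\triangle$ coincides verbatim, with the same $\beta$, with the form-boundedness of $q$ relative to the Dirichlet form of the transformed operator $L^h:=\tfrac1h(-\triangle)(h\,\cdot)=-\triangle-2\tfrac{\nabla h}{h}\cdot\nabla$.

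Now fix $z$ and set $w_z=v_z/P(\cdot,z)$. Then $w_z$ solves $L^hw_z=qw_z$ in $\Omega$ with constant boundary data $1$; the Green operator of $L^h$ has kernel $G^h(x,y)=\tfrac{P(y,z)}{P(x,z)}G(x,y)$, the Schr\"odinger Green operator of $L^h-q$ has kernel $\tfrac{P(y,z)}{P(x,z)}\mathcal G(x,y)$, and $w_z=1+\mathcal G^hq$, where $G^hq(x)=\int_\Omega G(x,y)\tfrac{P(y,z)}{P(x,z)}q(y)\,dy$ is exactly the exponent in \eqref{pointwiseestu1}. The $L^h$-diffusion is Brownian motion conditioned to exit $\Omega$ at $z$, so it has a \emph{single} exit point; accordingly the ``boundary condition on $q$'' that \eqref{balayagecond} imposes for $-\triangle$ collapses, for $L^h$, to the single scalar requirement $P^*(\delta q)(z)<+\infty$, which holds for $\sigma$-a.e.\ $z$ as soon as \eqref{balayagecond} holds. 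Granting this, I would run the proof of Theorem~\ref{u0thm}(i) --- which uses only the form bound (transferred above) together with the global pointwise and self-improving kernel estimates of \cite{FNV}, and which for this one-point boundary needs no further hypothesis on $q$ --- for $L^h-q$ with the unbounded right-hand side $q$, obtaining $w_z(x)=1+\mathcal G^hq(x)\le C_2\exp\!\big(C_3\,G^hq(x)\big)$ with $C_2,C_3$ depending only on $\Omega$ and $\beta$ and uniform in $z$; here one also uses that $G^h1(x)=GP(\cdot,z)(x)/P(x,z)$ together with standard comparison estimates for $G$ and $P$ in a bounded $C^2$ domain shows that the relevant kernel bounds and the value of $C$ in \eqref{u0upperestimate} are insensitive to replacing $-\triangle$ by $L^h$, uniformly over $z\in\partial\Omega$. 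Inserting the bound for $w_z$ into the representation of $u_1$ gives \eqref{pointwiseestu1}. For \eqref{u1L1ineq}, integrate \eqref{pointwiseestu1} over $\Omega$, use Fubini and $\int_\Omega G(x,y)\,dx=G1(y)\asymp\delta(y)$ (whence $\int_\Omega G^hq(x)\,P(x,z)\,dx\asymp P^*(\delta q)(z)$), and apply the self-improving (reverse-Jensen--type) bound $\int_\Omega e^{C_3\,G^hq(x)}P(x,z)\,dx\le C_1\,e^{C\,P^*(\delta q)(z)}$ from \cite{FNV} with $C$ the constant of \eqref{u0upperestimate}; then \eqref{balayagecond} yields \eqref{u1L1ineq}, so $u_1\in L^1(\Omega)$, and the very weak formulation of \eqref{dirichlet} follows from $u_1=1+\mathcal Gq$, $\mathcal Gq=G(q\,u_1)$ and the mapping properties of $G$.

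For part~(ii), let $u$ be a positive very weak solution of \eqref{dirichlet}; then $u=1+G(qu)\ge1$, whence, iterating, $u\ge\sum_{k\ge0}\big(G(q\,\cdot)\big)^k1=u_1$ a.e., so $u_1<\infty$ a.e., and testing $-\triangle u=qu$ against $f^2/u$ with $f\in C^\infty_0(\Omega)$ and using $u\ge1$ gives \eqref{impliesnormcond2} with $\beta=1$ by the classical supersolution argument. The lower bounds \eqref{pointwiseestu} and \eqref{u1L1ineqconv} are proved exactly as in part~(i) with every inequality reversed: from the same splitting $u\ge u_1=\int_{\partial\Omega}v_z\,d\sigma(z)$, the $h$-transform, and the lower estimate \eqref{u0lowerestimate} (already known, cf.\ \cite{GrV}) applied to $L^h-q$, one gets $w_z(x)=v_z(x)/P(x,z)\ge c_1\exp\!\big(c_2\,G^hq(x)\big)$ with $c_2$ the constant of \eqref{u0lowerestimate}, hence \eqref{pointwiseestu}; integrating \eqref{pointwiseestu} over $\Omega$ and applying Jensen's inequality --- now in the favorable direction, since $\int_\Omega P(x,z)\,dx$ is bounded above and below --- gives $\|u\|_{L^1}\ge\|u_1\|_{L^1}\ge c_1'\int_{\partial\Omega}\exp\!\big(c\,P^*(\delta q)(z)\big)\,d\sigma(z)$, which is \eqref{u1L1ineqconv} and forces \eqref{balayagecond} with constant $c$.

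The crux of the whole proof is the self-improvement step: upgrading the linear datum $\int_\Omega G^hq\,P(\cdot,z)\,dx\asymp P^*(\delta q)(z)$ to the exponential bound $\int_\Omega e^{C_3G^hq}P(\cdot,z)\,dx\lesssim e^{C\,P^*(\delta q)(z)}$ with the precise constant $C$ of \eqref{u0upperestimate}, uniformly in $z$. This is where the strict inequality $\beta<1$ is genuinely used; for $-\triangle$ itself it is an input from \cite{FNV}, and the new work is to carry it, and the accompanying pointwise kernel estimates, uniformly across the family of harmonic weights $\{P(\cdot,z):z\in\partial\Omega\}$ in a bounded $C^2$ domain, after which the remainder is bookkeeping within the very-weak-solution framework.
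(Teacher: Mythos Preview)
Your route to the pointwise estimates \eqref{pointwiseestu1} and \eqref{pointwiseestu} is essentially the paper's own: writing $u_1(x)=\int_{\partial\Omega}\sum_{j\ge 0}T^j(P(\cdot,z))(x)\,d\sigma(z)$ and applying the \cite{FNV} machinery with modifier $m=P(\cdot,z)$. Your $h$-transform language is a pleasant reformulation of this, and the identity $\int_\Omega h^2|\nabla(f/h)|^2=\int_\Omega|\nabla f|^2$ for harmonic $h$ is correct; the paper verifies that $P(\cdot,z)$ is an admissible modifier via the Hansen--Netuka lemma (Lemma~\ref{hansen}), which plays the role of your unspecified ``standard comparison estimates.''

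The genuine gap is in your derivation of \eqref{u1L1ineq}. You integrate \eqref{pointwiseestu1} over $\Omega$ and then invoke a ``self-improving (reverse-Jensen--type) bound'' $\int_\Omega e^{C_3 G^h q(x)}P(x,z)\,dx\le C_1\,e^{C P^*(\delta q)(z)}$, attributing it to \cite{FNV}. No such bound is in \cite{FNV}: that paper gives pointwise two-sided estimates for $\sum_j T^j m$, not integral bounds of this type, and a reverse-Jensen inequality with the upper-bound constant $C_3$ in the exponent does not follow from those estimates (the FNV lower bound would at best give the inequality with the smaller constant $c_2$). In fact this step is precisely the new content of the paper, and it is proved by a different mechanism. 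The paper uses the identity $\|u_1\|_{L^1(\Omega)}=|\Omega|+\int_\Omega u_0\,q\,dx$ (Lemma~\ref{u1finite}) and then establishes, for each $z\in\partial\Omega$, the estimate $\int_\Omega P(y,z)\,u_0(y)\,q(y)\,dy\le C_1\,e^{C P^*(\delta q)(z)}$ with the exact constant $C$ of \eqref{u0upperestimate}. This is obtained by a \emph{boundary limit} argument: one lets $x_j\to z$ normally and passes to the limit in $\dfrac{G(u_0 q)(x_j)}{\delta(x_j)}\le \dfrac{u_0(x_j)}{\delta(x_j)}\le C_1\,e^{C\,G(\delta q)(x_j)/\delta(x_j)}$, using the equation $u_0=G(u_0 q+1)$ and the convergence lemma $G(\phi q)(x_j)/\delta(x_j)\to\int_\Omega P(y,z)\phi(y)q(y)\,dy$ (Lemma~\ref{convlemma}), first for compactly supported $q$ and then by exhaustion. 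Integration over $\partial\Omega$ and $P1=1$ then give \eqref{u1L1ineq}. The same boundary-limit device, run with the lower bound \eqref{u0lowerestimate}, yields \eqref{u1L1ineqconv} with the \emph{same} constant $c$ as in \eqref{u0lowerestimate}; your Jensen argument for \eqref{u1L1ineqconv} is valid but would produce a strictly smaller exponent $c/\sup_z\int_\Omega P(x,z)\,dx$, so it does not quite deliver the stated conclusion.
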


We observe that under the assumptions of Theorem~\ref{nonlinthm} (i), it follows that $u_1\in L_{loc}^{1, 2}(\Omega)$ (see \cite{JMV}, Theorem 6.2). 
If we assume $q \in L^1 (\Omega)$, in addition to (\ref{impliesnormcond2})
 with $\beta \in (0, 1)$, then $u_1-1\in L_0^{1, 2}(\Omega)$, and $u_1$ is a weak solution  to (\ref{dirichlet}), instead of just very weak (see, e.g.,  \cite{AB}). 

We can distinguish condition (\ref{impliesnormcond2}) for Theorem \ref{nonlinthm} from (\ref{balayagecond}) in Theorem \ref{u0thm} via the example $q(x) = a \delta(x)^{-2}$, with $a C<1$ where $C$ is the constant in Hardy's inequality 
\[ \int_{\Omega} \frac{h^2}{\delta(x)^2}  \, d x \leq C \int_{\Omega} \left|\nabla h \right|^2 \, dx  \,\,\, \mbox{for all} \,\,\, h \in C^{\infty}_0 (\Omega). \]
Then (\ref{impliesnormcond2}) holds, and hence the conclusions of Theorem 1.1 (i) follow for equation (\ref{u0equation}).  However, if (\ref{dirichlet}) had a positive very weak solution $u$, then by (\ref{u1L1ineqconv}), $P^* (\delta q)$ would be exponentially integrable on $\Omega$.  By Jensen's inequality, we would then have $\int_\Omega \delta(x) q(x) dx < \infty$, which fails for $q(x) = a \delta(x)^{-2}$.  

We remark that the additional condition $\int_\Omega \delta(x) q(x) dx < \infty$, or equivalently $G q <+\infty$ a.e., combined  with (\ref{impliesnormcond2})  for any $\beta\in (0, 1)$,  
is generally not enough (unless $n=1$) to ensure that $u_1$ 
is a very weak solution to (\ref{dirichlet}). 

Theorem \ref{nonlinthm} leads to conditions for the existence of a very weak solution to (\ref{dirichlet}) in terms of Carleson measures and BMO.  For a measure $\mu$ on $\Omega$, define the Carleson norm of $\mu$ by 
\[ \Vert \mu \Vert_C = \sup_{r>0, x \in \partial \Omega} r^{1-n} \mu (\{y \in \Omega: |y-x| <r  \} ) . \]

For $f \in L^1 (\partial \Omega, \, d \sigma)$, define $U_r(x) = \{y \in \partial \Omega : |y-x|<r \}$ and 
\[ \Vert f \Vert_{BMO(\partial \Omega)} = \sup_{r >0, x \in \partial \Omega} |\sigma(U_r (x))|^{-1} \int_{U_r(x)} |f - f_{U_r(x)}| \, d \sigma,  \]
where $f_{U_r(x)} = |\sigma(U_r (x))|^{-1} \int_{U_r (x)} f\, d \sigma$ is the average of $f$ on $U_r (x)$.

\begin{Cor}\label{carl}
Let $\Omega \subset \R^n$ be a bounded $C^2$ domain, for $n \geq 2$, 
and let $q \in L^1_{loc}(\Omega)$, $q\ge 0$.   Suppose (\ref{impliesnormcond2}) holds for some $\beta \in (0,1)$. Then there exist $\epsilon_1, \epsilon_2 >0$, depending only on $\Omega$ and $\beta$ such that if
  
\vspace{0.1in}

(A) $\Vert P^* (\delta q ) \Vert_{BMO (\partial \Omega)} < \epsilon_1$,

or

(B) $\Vert \delta q \, dx \Vert_C < \epsilon_2$,

\vspace{0.1in} 
\noindent{then} $u_1 \in L^1 (\Omega, dx)$ and $u_1 $ is a positive very weak solution of (\ref{dirichlet}).

\end{Cor}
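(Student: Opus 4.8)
The plan is to derive both (A) and (B) from Theorem~\ref{nonlinthm}(i), so the only thing to verify is that each smallness hypothesis forces the exponential integrability condition \eqref{balayagecond}, i.e. $\int_{\partial\Omega} e^{C P^*(\delta q)}\,d\sigma < +\infty$ with $C$ the constant from \eqref{u0upperestimate}. Once \eqref{balayagecond} holds, Theorem~\ref{nonlinthm}(i) gives directly that $u_1 = 1 + \mathcal{G}q$ is a positive very weak solution of \eqref{dirichlet} and that $u_1 \in L^1(\Omega, dx)$ via \eqref{u1L1ineq}. So the content of the Corollary is a statement about when a balayage $P^*(\delta q)$ is exponentially integrable on $\partial\Omega$ at a prescribed exponent.

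For part (A), I would invoke the John--Nirenberg inequality on the space of homogeneous type $(\partial\Omega, |\cdot|, d\sigma)$: if $f \in BMO(\partial\Omega)$ then $e^{\lambda |f|} \in L^1(\partial\Omega, d\sigma)$ whenever $\lambda < c_n/\|f\|_{BMO(\partial\Omega)}$ for a dimensional (and domain-geometry) constant $c_n$. Thus one should set $f = P^*(\delta q)$, and choose $\epsilon_1 = c_n / C$ (with $C$ as in \eqref{u0upperestimate}); then $\|P^*(\delta q)\|_{BMO} < \epsilon_1$ gives $C < c_n/\|P^*(\delta q)\|_{BMO}$, hence $e^{C P^*(\delta q)} \le e^{C|P^*(\delta q)|} \in L^1(\partial\Omega,d\sigma)$, which is exactly \eqref{balayagecond}. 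A minor point to address is that John--Nirenberg as usually stated controls $e^{\lambda|f - f_{\partial\Omega}|}$ on a fixed surface ball; since $\partial\Omega$ is bounded and $d\sigma$ is finite, adding back the constant $f_{\partial\Omega}$ only changes the integral by the finite factor $e^{C|f_{\partial\Omega}|}$, and one covers $\partial\Omega$ by finitely many surface balls, so no generality is lost.

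For part (B), the route is to show that the Carleson-measure smallness $\|\delta q\,dx\|_C < \epsilon_2$ implies that $P^*(\delta q)$ lies in $BMO(\partial\Omega)$ with norm $\le c'_n \|\delta q\,dx\|_C$, and then apply part (A). This is the standard duality between Carleson measures on $\Omega$ and $BMO$ on $\partial\Omega$ under the balayage (adjoint Poisson) operator: the key estimate is that for a surface ball $U_r(x_0)$, the oscillation of $P^*\mu$ over $U_r(x_0)$ is controlled, via the decay of the Poisson kernel $P(y,z)$ in $|z - z'|$ for $y$ away from $\partial\Omega$ together with the Carleson packing condition on the Carleson box $\{y\in\Omega: |y-x_0|<r\}$, by $\|\mu\|_C$ times a convergent geometric sum over dyadic annuli. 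Given this, choose $\epsilon_2 = \epsilon_1 / c'_n$ and chain the two implications. The main obstacle is precisely this Carleson $\Rightarrow$ $BMO$ estimate for $P^*$ on a $C^2$ domain: it requires the standard Poisson-kernel bounds $P(y,z) \approx \delta(y)\,|y-z|^{-n}$ and a localization/annular-decomposition argument, which is routine in principle but must be carried out carefully with the domain geometry; alternatively one could cite it from the literature on Carleson measures and $BMO$ duality. Everything else reduces to bookkeeping of constants to line up $\epsilon_1, \epsilon_2$ with the fixed constant $C$ of Theorem~\ref{u0thm}.
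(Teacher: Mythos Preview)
Your proposal is correct and follows essentially the same approach as the paper: (A) is reduced to \eqref{balayagecond} via the John--Nirenberg inequality, and (B) is reduced to (A) via the standard fact that the balayage of a Carleson measure lies in $BMO(\partial\Omega)$ with norm controlled by the Carleson norm, after which Theorem~\ref{nonlinthm}(i) is invoked. The paper simply cites the Carleson $\Rightarrow$ BMO step from the literature (\cite{PV}, \cite{G}) rather than sketching the annular decomposition you outline.
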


For the case $\Omega = \R^n$, $n \ge 3$, we denote by $I_2 f = (-\triangle)^{-1} f$ the Newtonian potential of $f$: 
$$
I_2 f(x) = c_{n} \int_{\R^n} \frac{f(y) \, dy}{|x-y|^{n-2}}, 
\quad x \in \R^n, 
$$
where $c_{n}$ is a positive normalization constant.  Let $G(x,y) = c_n |x-y|^{2-n}$ be the kernel of $I_2$.  

\begin{Thm}\label{riesz-est} Let $n \ge 3$. 

(i) Suppose there exists $\beta \in (0,1)$ such that
\begin{equation}\label{trace-1}
\int_{\R^n} h^2 q \, d x \leq \beta^2 \int_{\R^n} \left| \nabla h \right|^2 \, dx \,\,\, \mbox{for all} \,\,\, h \in C^{\infty}_0 (\R^n),
\end{equation}
and
\begin{equation}\label{finite-int}
\int_{\R^n} \frac{q(y) \, dy}{(1 + |y|)^{n-2}} < +\infty.
\end{equation}
Then $u_1= 1 + \mathcal{G} q$ is a positive minimal solution (in the distributional sense) to 
\begin{equation}\label{eqnRn}
\left\{ \begin{aligned}
-\triangle u  = q u \, \,  & \, \, \textrm{on} \, \,  \R^n, \\
\lim \inf_{x \rightarrow \infty} u(x) =1 . & 
\end{aligned}
\right.  
\end{equation} 
Also,
\begin{equation}\label{upper-estRn}
u_1(x) \le \, e^{C \,  I_2 q(x)},  \,\,\, \mbox{for all} \,\,\, x \in \R^n,   
 \end{equation}
where $C$ depends only on $\beta$ and $n$. 
 
(ii) Conversely, if there is a positive (distributional) solution $u$ of (\ref{eqnRn}), then (\ref{trace-1}) holds with $\beta=1$, (\ref{finite-int}) holds,  and 
\begin{equation}\label{lower-estRn}
u(x) \ge \, e^{c \, I_2 q(x)},  \,\,\, \mbox{for all} \,\,\, x \in \R^n,  
 \end{equation}
where $c$ depends only on $ n$.  
 \end{Thm}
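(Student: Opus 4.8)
The guiding principle is that for $\Omega = \R^n$ the boundary degenerates to the single point at infinity, the distinguished positive harmonic function is the constant $1$, the Poisson integral and the balayage $P^*$ degenerate, and $P^*(\delta q)$ is replaced (up to an equivalent weight) by the Newtonian potential $I_2 q$; this explains why the delicate exponential integrability condition of Theorem~\ref{nonlinthm} becomes the simple requirement (\ref{finite-int}), which is just the statement that $I_2 q < +\infty$ a.e.\ (split the integral for $I_2 q(x)$ into $|y| < 2|x|+2$, controlled by $q \in L^1_{loc}$, and the complementary far part, controlled by (\ref{finite-int})).

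\emph{Part (i).} Write $u_1 = \sum_{j \ge 0} T^j 1$ with $T = G(q \, \cdot \,) = I_2(q \, \cdot \,)$, $T^0 1 = 1$, and $T^j 1 = G_j q =: v_j$ (which follows from (\ref{defGj})--(\ref{defGreenfcnSchr})). The heart of the matter is the pointwise bound (\ref{upper-estRn}). Condition (\ref{trace-1}) with $\beta < 1$ is exactly form boundedness of $q$ relative to $-\triangle$ on $\R^n$; I would invoke the pointwise Neumann series estimates of \cite{FNV} for the Riesz kernel $G(x,y) = c_n |x-y|^{2-n}$ (whose reciprocal is a quasi-metric), which under form boundedness control $\mathcal G(x,y) = \sum_j G_j(x,y)$ from above; integrating against $q$ and using (\ref{finite-int}) then gives $\mathcal G q < +\infty$ a.e.\ and, after checking that for $\R^n$ the bound collapses to the exponential of $I_2 q$ (there being no residual boundary average), the estimate $u_1(x) \le e^{C I_2 q(x)}$. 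The same conclusion can be reached by exhausting $\R^n$ by balls $B_R$ and solving on $B_R$ the problem (\ref{dirichlet}) with $q$ replaced by $q_R = q \, \mathbf{1}_{B_{R/2}}$: then (\ref{impliesnormcond2}) on $B_R$ is inherited from (\ref{trace-1}), and (\ref{balayagecond}) on $\partial B_R$ is trivial since $q_R$ vanishes near $\partial B_R$, so Theorem~\ref{nonlinthm}(i) applies and its estimate (\ref{pointwiseestu1}) passes to the limit $R \to \infty$ (using $G_{B_R} \uparrow G$, $P_{B_R}(y,z)/P_{B_R}(x,z) \to 1$, $\int_{\partial B_R} P_{B_R}(x,z) \, d\sigma(z) = 1$, $q_R \uparrow q$). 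Once finiteness of $\mathcal G q$ is known, $-\triangle u_1 = -\triangle \sum_{j \ge 1} v_j = q + q \sum_{j \ge 1} v_j = q u_1$ distributionally; $\mathcal G q = \sum_j v_j$ is a sum of potentials on $\R^n$, hence a potential, so $\liminf_{x \to \infty} u_1(x) = 1$; and if $w > 0$ is any distributional solution of (\ref{eqnRn}), then by the argument in part (ii) below $w \ge 1 + G(q w)$, whence $w \ge \sum_{j=0}^N T^j 1$ for all $N$ and $w \ge u_1$, so $u_1$ is minimal.

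\emph{Part (ii).} Let $u > 0$ solve (\ref{eqnRn}) distributionally. Since $q u \ge 0$, $u$ is superharmonic; comparing $u$ with $G_{B_R}(q u)$ on $B_R$ and letting $R \to \infty$ gives $G(q u) < +\infty$ a.e., so the Riesz decomposition yields $u = G(q u) + h$ with $h$ nonnegative harmonic on $\R^n$, hence $h \equiv c$ by Liouville; as $G(q u)$ is a potential, $\liminf_{x \to \infty} u = c = 1$, so $u = 1 + G(q u) \ge 1$, and iterating, $u \ge \sum_{j=0}^N T^j 1 \uparrow u_1$; in particular $G q \le G(q u) < +\infty$ a.e., which is (\ref{finite-int}). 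Inequality (\ref{trace-1}) with $\beta = 1$ follows from the ground state substitution: for $\phi \in C^\infty_0(\R^n)$ put $\phi = u \psi$ and use $-\triangle u = q u$ to obtain $\int |\nabla \phi|^2 - \int q \phi^2 = \int u^2 |\nabla \psi|^2 \ge 0$ (carried out on balls with cutoffs because of the low regularity of $u$). For the lower bound (\ref{lower-estRn}) I would show by induction that $T^j 1 \ge \frac{c^j}{j!} (I_2 q)^j$; this reduces to the pointwise inequality $G\bigl( q (I_2 q)^k \bigr) \ge \frac{c}{k+1} (I_2 q)^{k+1}$, obtained (via exhaustion by balls, as in \cite{GrV}) from the fact that $(G_{B_R} q)^{k+1}$ is a subsolution of $-\triangle F \le (k+1)(G_{B_R} q)^k q$ vanishing on $\partial B_R$, or alternatively from the quasi-metric inequality for $1/G$ at the cost of $c = c(n)$. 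Summing, $u \ge u_1 \ge \sum_{j \ge 0} \frac{(c \, I_2 q)^j}{j!} = e^{c \, I_2 q}$.

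The main obstacle is the upper bound (\ref{upper-estRn}): the pointwise control of the entire Neumann series kernel $\sum_j G_j(x,y)$ under mere form boundedness---the \cite{FNV} machinery---is the substantial ingredient, and one must verify that on $\R^n$ it degenerates precisely to the exponential of $I_2 q$ with no leftover boundary term. A subsidiary difficulty, in part (ii), is justifying the chain rule and superharmonicity computations for the merely distributional solution $u$ (and for $I_2 q$), handled by the customary regularization and ball exhaustion arguments.
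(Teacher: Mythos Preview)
Your core approach---applying the \cite{FNV} Neumann-series estimates to the Riesz kernel $G(x,y)=c_n|x-y|^{2-n}$, whose reciprocal is already a quasi-metric so that the modifier is $m\equiv 1$---is exactly the paper's. The paper's execution is leaner in a few places. First, it records the $\R^n$ analogue of Lemma~\ref{leastconst} (so that (\ref{trace-1}) is literally $\|T\|_{L^2(q\,dx)\to L^2(q\,dx)}=\beta^2$) and then invokes \cite{FNV} with $m=1$ for \emph{both} bounds at once, obtaining $e^{c\,T1}\le 1+\sum_{j\ge 1}T^j1\le e^{C\,T1}$ with $T1=I_2 q$; your ball-exhaustion alternative for the upper bound and your direct induction $T^j1\ge \frac{c^j}{j!}(I_2q)^j$ for the lower bound are valid, but they reprove what \cite{FNV} already packages. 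Second, for (\ref{trace-1}) with $\beta=1$ in part (ii), the paper uses Schur's test (from $Tu\le u$, $0<u<\infty$ a.e.) together with the $\R^n$ version of Lemma~\ref{leastconst}, which is shorter than your ground-state substitution and sidesteps the low-regularity justification you flag. Third, the paper takes the integral identity $u=I_2(qu)+1$ as the working definition of a distributional solution of (\ref{eqnRn}) (citing \cite{L}, Sec.~I.5, in the paragraph preceding the proof), so your Riesz--Liouville derivation of that identity is already absorbed into the set-up. None of your alternative routes is wrong; they simply do by hand what the paper delegates to \cite{FNV}, \cite{L}, and Schur's lemma.
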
 
  
Condition (\ref{trace-1}) is the so-called trace inequality which expresses the continuous imbedding of  
$L_0^{1, 2}(\R^n)$ into $L^2(\R^n, q \, dx)$. The class of functions $q$ (or more generally measures $\omega$) 
such that (\ref{trace-1}) holds is well understood, and several characterizations are known (see \cite{AH}, \cite{M}, and the literature cited there).   
  
Theorems \ref{u0equation}, \ref{nonlinthm}, and \ref{riesz-est} are the model  cases of more general results for wider classes of operators, including fractional Laplacians, and domains $\Omega$ (Lipschitz and NTA domains), 
as well as more general right-hand sides and boundary data, that we plan to address in a forthcoming paper.  

The Feynman-Kac gauge $u_1$ is  closely related, via a formal substitution $v = \log
u_1$, to a generalized solution of the nonlinear boundary value problem 
with quadratic growth in the gradient: 
\begin{equation}\label{nonlineareqn-1} 
\left\{
\begin{aligned}
-\triangle v & = |\nabla v|\,^2 + q \, \, & \mbox{in} \, \,  \Omega \\
v & = 0 \quad & \mbox{on} \, \,  \partial \Omega .
\end{aligned}
\right. 
\end{equation} 

However, it is well known that the relation between (\ref{dirichlet}) and (\ref{nonlineareqn-1}) is not as simple as the formal substitution suggests 
(see \cite{FM2}).  Nevertheless, we obtain the following result.

\begin{Thm}\label{riccatithm}  Suppose $\Omega \subset \R^n$ is a bounded $C^2$ domain, where $n \geq 2$, and  $q \in L^1_{loc}(\Omega)$, $q\ge 0$.    

(i)  Suppose there exists $\beta \in (0,1)$ such that (\ref{impliesnormcond2}) holds, and (\ref{balayagecond}) holds.  Then $v= \log u_1 $ is a very weak solution of (\ref{nonlineareqn-1}) with $v \in L^{1,2}_{loc} (\Omega)$.  
 
(ii) Conversely, if (\ref{nonlineareqn-1}) has a very weak solution in $L^{1,2}_{loc} (\Omega)$, then (\ref{impliesnormcond2})
holds with $\beta=1$, and (\ref{balayagecond}) holds with some small constant $c = c (\Omega) >0$.  
\end{Thm}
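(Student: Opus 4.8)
The plan is to deduce Theorem~\ref{riccatithm} from Theorem~\ref{nonlinthm} by carefully passing between the linear equation \eqref{dirichlet} and the Riccati-type equation \eqref{nonlineareqn-1} through the substitution $v = \log u_1$. For part (i), I would start from the conclusion of Theorem~\ref{nonlinthm}(i): under \eqref{impliesnormcond2} with $\beta \in (0,1)$ and \eqref{balayagecond}, $u_1 = 1 + \mathcal{G}q$ is a positive very weak solution of \eqref{dirichlet} with $u_1 \in L^1(\Omega,dx)$ and, as noted after that theorem, $u_1 \in L^{1,2}_{loc}(\Omega)$ by \cite{JMV}, Theorem~6.2. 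The key pointwise fact is that $u_1 \ge 1$ on $\Omega$ (since $\mathcal{G}q \ge 0$), so $v = \log u_1 \ge 0$ is well defined and bounded below; moreover $\nabla v = \nabla u_1 / u_1$, and since $u_1 \ge 1$ locally and $u_1$ is locally bounded above (again from the pointwise estimate \eqref{pointwiseestu1} together with local boundedness of the Green and Poisson kernels away from the pole/boundary), we get $|\nabla v| \le |\nabla u_1|$ locally in $L^2$, hence $v \in L^{1,2}_{loc}(\Omega)$. Then the formal computation $-\triangle v = -\triangle u_1 / u_1 + |\nabla u_1|^2/u_1^2 = q u_1/u_1 + |\nabla v|^2 = q + |\nabla v|^2$ must be justified in the very weak sense: I would test the distributional identity $-\triangle u_1 = q u_1$ against $\varphi/u_1$ for suitable test functions $\varphi$ (a standard but delicate truncation argument, since $1/u_1 \le 1$ is bounded but not smooth), integrate by parts, and collect terms to obtain that $v$ satisfies $-\triangle v = |\nabla v|^2 + q$ in the very weak sense, with the correct boundary behavior $v = 0$ on $\partial\Omega$ inherited from $u_1 = 1$ on $\partial\Omega$ in the very weak sense. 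The boundary condition and the precise meaning of ``very weak solution'' for \eqref{nonlineareqn-1} must be handled using Definitions~\ref{defveryweak}--\ref{defveryweak2}; I expect this to require the estimate \eqref{u1L1ineq} to control the relevant boundary integrals.

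For part (ii), the argument runs in reverse: given a very weak solution $v \in L^{1,2}_{loc}(\Omega)$ of \eqref{nonlineareqn-1}, I would set $u = e^{v}$ and show it is a positive very weak solution of \eqref{dirichlet}, so that Theorem~\ref{nonlinthm}(ii) applies and yields \eqref{impliesnormcond2} with $\beta = 1$ and \eqref{balayagecond} with a small constant $c = c(\Omega) > 0$. Here $v \ge 0$ need not hold a priori, but one still gets $u = e^v > 0$; the issue is integrability: from $v \in L^{1,2}_{loc}$ and the equation, $v$ should be bounded below (by the maximum principle applied to the very weak formulation, using $q \ge 0$ and $v=0$ on $\partial\Omega$), so $u \ge e^{-\|v^-\|_\infty}$, but to conclude $u \in L^1(\Omega,dx)$ — which is needed to invoke Theorem~\ref{nonlinthm}(ii) — one needs $v \in L^\infty$ or at least exponential integrability of $v$, which does not follow directly from $v \in L^{1,2}_{loc}$. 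I would instead argue that $u = e^v$ is a \emph{supersolution}: testing the very weak equation for $v$ against $\varphi e^v$ for nonnegative test functions and rearranging shows $-\triangle u \ge q u$ in the distributional sense (the $|\nabla v|^2$ term combines favorably), so $u$ dominates the minimal candidate $u_1 = 1 + \mathcal{G}q$ by a comparison/iteration argument (expanding in the Neumann series \eqref{defGreenfcnSchr}), forcing $\mathcal{G}q < \infty$ a.e. and hence that \eqref{dirichlet} has a positive very weak solution. Then Theorem~\ref{nonlinthm}(ii) gives the conclusions.

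The main obstacle I anticipate is the rigorous justification of the nonlinear change of variables at the level of very weak solutions — in both directions, but especially in (ii). The very weak formulation only tests against functions of the form $\mathcal{G}$ applied to nonnegative test data (or the analogous dual formulation in Definitions~\ref{defveryweak}--\ref{defveryweak2}), so one cannot simply multiply by $e^{v}$ or $1/u_1$ and integrate by parts; instead one must approximate, for instance by truncating $v$ at levels $\pm k$, working with the bounded truncations $v_k$ where the chain rule for Sobolev functions is valid, deriving the identity for $e^{v_k}$ or $\log(u_1 \wedge k)$, and then passing to the limit using monotone/dominated convergence together with the a priori bounds from Theorem~\ref{nonlinthm}. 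Controlling the gradient terms $|\nabla v_k|^2$ in this limiting process, and ensuring no boundary mass is lost (so that the boundary data $v=0$, resp. $u=1$, is preserved), is where the real work lies; the local integrability statement $v \in L^{1,2}_{loc}(\Omega)$ in (i) should come out of this analysis essentially for free once the identity is established, via \cite{JMV}, Theorem~6.2 applied to $u_1$.
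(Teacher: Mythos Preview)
Your overall strategy matches the paper's: in (i) pass from $u_1$ to $v=\log u_1$ via truncations $u_k=\min(u_1,e^k)$, $v_k=\min(v,k)$, and test against $h/u_k$; in (ii) show $u=e^v$ is a supersolution of \eqref{dirichlet}, then iterate to get $u_1 \le u < \infty$ a.e.\ and invoke Theorem~\ref{nonlinthm}(ii). The truncation argument you sketch is essentially what the paper carries out.

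The ingredient you have not identified is the \emph{Riesz decomposition}, which the paper uses in both directions to handle the boundary condition. In (i), after establishing $-\triangle v = |\nabla v|^2 + q$ in $\mathcal{D}'(\Omega)$, one still needs $v = G(|\nabla v|^2 + q)$, i.e., zero boundary data in the very weak sense. The paper writes $v = G(-\triangle v) + g$ with $g$ the greatest harmonic minorant of the superharmonic function $v$, and shows $g=0$ because $0 \le g \le v \le G(qu_1)$ and Green potentials have zero greatest harmonic minorant; your proposed tool \eqref{u1L1ineq} is an $L^1$ bound and does not detect this harmonic part. In (ii), note first that $v = G(|\nabla v|^2 + q) \ge 0$ is immediate from the integral form of the very weak solution, so your worry about the sign of $v$ is misplaced and $u=e^v \ge 1$ automatically. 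From $-\triangle u \ge qu$ the paper again applies Riesz decomposition to the superharmonic $u$ to obtain $u \ge G(qu) + g$ with harmonic minorant $g \ge 1$, whence $u \ge Tu + 1$; this is precisely the inequality that feeds your iteration. Two minor points: the paper obtains $v \in W^{1,2}_{loc}$ from the potential-theoretic fact that $\log$ of a superharmonic function bounded below by $1$ lies in $W^{1,2}_{loc}$ (citing \cite{HKM}, \cite{MZ}), explicitly avoiding reliance on \cite{JMV} for $u_1$; and your claim that $u_1$ is locally bounded above is false in general (it is only finite quasi-everywhere), though since $u_1 \ge 1$ the bound $|\nabla v| \le |\nabla u_1|$ you want holds regardless.
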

 
A similar problem for the superquadratic equation
 \[ - \triangle v = |\nabla v|^s  + q,\]
with $s>2$, was solved in \cite{HMV}, where a thorough discussion   of such problems and more details can be found. We remark 
that no additional condition like \eqref{balayagecond} is required for $s>2$.  Theorem \ref{riccatithm} resolves the case $s=2$, which was stated as an open problem in \cite{HMV}.

Regarding solutions to (\ref{nonlineareqn-1}), we refer also to Ferone and Murat \cite{FM1} where the existence of finite energy solutions  $v \in L^{1, 2}_0(\Omega)$ is  proved  for $q \in L^{\frac{n}{2}}(\Omega)$ ($n \ge 3$), with sufficiently small norm; in that case $u_1-1=e^v-1 \in L^{1, 2}_0(\Omega)$. 
In  \cite{FM3}, these results are extended to $q \in L^{\frac{n}{2}, \infty}(\Omega)$.  
(See also \cite{ADP}, \cite{AB} where the existence of such solutions is obtained for $q \in L^1(\Omega)$ satisfying  (\ref{impliesnormcond2})
 with $\beta \in (0, 1)$.) Clearly, for $q \in L^{\frac{n}{2}, \infty}(\Omega)$, the assumptions of 
Corollary~\ref{carl}, and hence Theorem~\ref{riccatithm},  are satisfied; that is,  \eqref{impliesnormcond2} holds, and 
$\delta q$ is a Carleson measure, which yields \eqref{balayagecond}.

In Section \ref{weak}, we discuss very weak solutions for Schr\"{o}dinger equations.  The proofs of Theorems \ref{u0equation}, \ref{nonlinthm}, and \ref{riesz-est} are given in Section \ref{applications}.  In Section \ref{riccati}, we discuss the nonlinear equation \eqref{nonlineareqn-1} and prove Theorem \ref{riccatithm}, using techniques from potential theory.

We would like to thank Fedor Nazarov for valuable conversations related to the content of this paper, which is a continuation and application of \cite{FNV}.

\bgp

\section{Very Weak Solutions}\label{weak}

\bgp 

Let $\Omega \subset \R^n$ be a bounded $C^2$ domain with Green's function $G(x,y)$, where $n \geq 2$.  Recall that $\delta(x) = \text{dist}(x, \partial \Omega)$.  We will use the following well-known estimates repeatedly:
\begin{equation}\label{green-smootha}
G (x,y) \approx \frac{\delta(x) \, \delta(y)}{|x-y|^{n - 2} (|x-y| + \delta (x) + \delta(y))^2 },  \quad n \ge 3, 
\end{equation}
\begin{equation}\label{green-smoothb}
G (x,y) \approx \ln \left (1+ \frac{\delta(x) \, \delta(y)}{|x-y|^{2}} \right),  \quad n =2, 
\end{equation}
for all $x, y \in \Omega$, where ``$\approx$" means that the ratio of the two sides is bounded above and below by positive constants depending only on $\Omega$  
(see \cite{Wid}, \cite{Zh} for $n \ge 3$; \cite{CZ}, Theorem 6.23 for $n=2$). 

Estimates \eqref{green-smootha}, \eqref{green-smoothb} yield   a  
cruder upper estimate
\begin{equation}\label{green-smoothc}
G (x,y) \le C \frac{\delta(x)}{|x-y|^{n - 1}},  \quad n \ge 2 ,   
\end{equation}
for all $x, y \in \Omega$. This is obvious if $n \ge 3$; for $n=2$, notice that 
\[
 \ln \left (1+ \frac{\delta(x) \, \delta(y)}{|x-y|^{2}} \right) \le \frac{\delta(x) \, \delta(y)}{|x-y|^{2}} .
\]
Hence, for  $\delta (y) \le 2 |x-y|$, we have,  
\[
G(x, y) \le C \frac{\delta(x)}{|x-y|}. 
\]
For $\delta (y) > 2 |x-y|$, using 
the inequality $\delta(y)\le |x-y|+ \delta(x)$, we see that $|x-y|< \delta (x)$ and 
$\delta(y) < 2 \delta(x)$. Hence, in this case, 
\[
G(x, y) \le C  \ln \left (1+ \frac{\delta(x) \, \delta(y)}{|x-y|^{2}}\right) \le 
C  \ln \left (1+ \frac{2 \delta(x)^2}{|x-y|^{2}}\right) \le C \frac{\delta(x)}{|x-y|} , 
\]
which verifies \eqref{green-smoothc} for $n = 2$. 

The preceding estimates yield 
\begin{equation}\label{G1est}
G1 (x) = \int_{\Omega} G(x,y) \, dy \approx \delta (x) , \quad n \ge 2,   
\end{equation}
for all $x \in \Omega$. Indeed, the lower bound $G 1(x) \ge c \, \delta(x)$   follows from the well-known estimate 
$G(x, y)\ge c \, \delta(x) \delta(y)$, which is an obvious consequence of  \eqref{green-smootha}, \eqref{green-smoothb}. The upper bound in \eqref{G1est} 
follows by integrating both sides 
of \eqref{green-smoothc} with respect to $dy$ over a ball $B(x, R)$ with  
$R=\text{diam} (\Omega)$ so that $\Omega \subset B(x, R)$:
\[
G1 (x) \le C \, \delta(x) \int_{B(x, R)} \frac{dy}{|x-y|^{n - 1}} = C_1 \,  \delta(x) .
\]

Our first goal is to define a very weak solution for Schr\"{o}dinger equations.  We begin by defining very weak solutions for Poisson's equation with Dirichlet boundary conditions.  We will use the class of test functions
\[  C^2_0 (\overline{\Omega})= \{ h \in C^2 (\overline{\Omega}): h=0 \,\,\mbox{on} \,\, \partial \Omega \}. \]

\begin{Def}\label{defveryweaksoln} Suppose $f \in L^1 (\Omega, \delta dx)$.  A function $u \in L^1(\Omega, dx)$ is a very weak solution of the Dirichlet problem 
\begin{equation}\label{poisson} 
\left\{ \begin{aligned}
-\triangle u & = f\,\,  &\mbox{in} \, \,  \Omega, \\
u & = 0\,\, &\mbox{on} \, \,  \partial \Omega
\end{aligned}
\right.  
\end{equation}
if 
\begin{equation}\label{weak-defn} 
-\int_\Omega u \, \triangle h \,  dx = \int_\Omega h \, f \, dx,
\end{equation}
for all $h \in C^2_0 (\overline{\Omega})$.   
\end{Def}

The following lemma concerning the existence and uniqueness of very weak solutions is well known (see \cite{BCMR}, Lemma 1). 
For convenience we supply a simple proof which shows additionally that the weak solution is given by the Green's potential $G f$, defined by (\ref{defngreenpot}).  

\begin{Lemma}\label{lemma-weak}
(i) Let $f \in L^1 (\Omega, \delta dx)$. Then there exists a unique very weak solution $u \in L^1(\Omega, dx)$ of (\ref{poisson}) given by 
$u=G f$. 

(ii) If $f\ge 0$ a.e. and $Gf(x_0)<+\infty$ for some $x_0 \in \Omega$, then $f \in L^1 (\Omega, \delta dx)$ and $u=Gf\in L^1(\Omega, dx)$ is a very weak solution of  (\ref{poisson}).  
\end{Lemma}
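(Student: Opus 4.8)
The plan is to establish both parts by working with the Green potential directly, using the well-known estimate $G1(x) \approx \delta(x)$ recorded in \eqref{G1est}.

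\textbf{Part (i): existence.} First I would verify that $u = Gf$ is well-defined and lies in $L^1(\Omega, dx)$. Since $|f| \in L^1(\Omega, \delta\, dx)$, Tonelli's theorem applied to $\int_\Omega |Gf(x)|\, dx \le \int_\Omega \int_\Omega G(x,y)|f(y)|\, dy\, dx = \int_\Omega \left(\int_\Omega G(x,y)\, dx\right)|f(y)|\, dy$ reduces matters to the symmetry $\int_\Omega G(x,y)\, dx = G1(y) \approx \delta(y)$, which is finite and integrable against $|f|\, dy$ by hypothesis; this also shows $Gf(x)$ is finite for a.e.\ $x$. Next, to check \eqref{weak-defn}, fix $h \in C^2_0(\overline{\Omega})$. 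Since $h = 0$ on $\partial\Omega$ and $h \in C^2$, one has the representation $h(y) = -\int_\Omega G(y,x)\,\triangle h(x)\, dx$ (the Green's function inverts $-\triangle$ with zero Dirichlet data on such test functions). Then, by Fubini (justified because $G(x,y)|f(y)||\triangle h(x)|$ is integrable on $\Omega\times\Omega$, using $\|\triangle h\|_\infty < \infty$, the symmetry of $G$, and $f \in L^1(\Omega,\delta\, dx)$),
\[
-\int_\Omega Gf(x)\,\triangle h(x)\, dx = -\int_\Omega \left(\int_\Omega G(x,y) f(y)\, dy\right)\triangle h(x)\, dx = \int_\Omega h(y)\, f(y)\, dy,
\]
which is \eqref{weak-defn}. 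For uniqueness, it suffices by linearity to show that if $u \in L^1(\Omega, dx)$ satisfies $\int_\Omega u\,\triangle h\, dx = 0$ for all $h \in C^2_0(\overline{\Omega})$, then $u = 0$ a.e. Given $g \in C_0^\infty(\Omega)$, solve $\triangle h = g$ with $h \in C^2_0(\overline{\Omega})$ (possible by standard elliptic regularity on a $C^2$ domain, e.g.\ $h = -Gg$, which is $C^2$ up to the boundary since $g$ is smooth and compactly supported); then $\int_\Omega u\, g\, dx = 0$ for all such $g$, forcing $u = 0$ a.e.

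\textbf{Part (ii).} Assume $f \ge 0$ a.e.\ and $Gf(x_0) < +\infty$ for some $x_0$. I want to upgrade this single finiteness point to $f \in L^1(\Omega, \delta\, dx)$; once that is known, part (i) applies verbatim. The idea: by the lower bound $G(x_0, y) \ge c\, \delta(x_0)\,\delta(y)$ (an immediate consequence of \eqref{green-smootha}, \eqref{green-smoothb}, as already noted in the text),
\[
+\infty > Gf(x_0) = \int_\Omega G(x_0, y) f(y)\, dy \ge c\, \delta(x_0) \int_\Omega \delta(y) f(y)\, dy,
\]
so $\int_\Omega \delta(y) f(y)\, dy \le \frac{Gf(x_0)}{c\,\delta(x_0)} < +\infty$, i.e.\ $f \in L^1(\Omega, \delta\, dx)$. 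Then by part (i), $u = Gf \in L^1(\Omega, dx)$ is the very weak solution of \eqref{poisson}.

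I do not expect a serious obstacle here; the only technical points requiring care are the Fubini justifications (handled uniformly by the symmetry of $G$ and the estimate $G1 \approx \delta$) and the solvability of $\triangle h = g$ within $C^2_0(\overline{\Omega})$ used for uniqueness, which rests on $C^2$ regularity of the domain and interior/boundary Schauder theory applied to $h = -Gg$ with $g \in C_0^\infty(\Omega)$. If one prefers to avoid invoking regularity theory, an alternative for uniqueness is to test against $h$ solving $\triangle h = g$ only approximately, or to note that a distribution $u$ with $\triangle u = 0$ (in the very weak sense, hence in $\mathcal{D}'(\Omega)$) is harmonic and the vanishing boundary condition built into the test class forces $u \equiv 0$; but the $g \in C_0^\infty$ argument above is cleanest.
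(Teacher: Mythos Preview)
Your proof is correct; the uniqueness argument is identical to the paper's. The existence argument and part (ii) are handled differently, though. For existence, the paper approximates $f$ by smooth compactly supported $f_k$, applies the classical Green's identity to $u_k = Gf_k$, and passes to the limit using $\|u-u_k\|_{L^1} \lesssim \|f-f_k\|_{L^1(\delta\,dx)}$; you instead invoke the Green representation $h = G(-\triangle h)$ directly and swap integrals by Fubini, which is cleaner and avoids approximation. For part (ii), the paper argues that $Gf$ is superharmonic, applies the mean-value inequality on a ball $B(x_0,r)$ to get $\int_B Gf < \infty$, and then uses $G\chi_B(y) \ge C\delta(y)$; your one-line argument from the pointwise lower bound $G(x_0,y) \ge c\,\delta(x_0)\,\delta(y)$ bypasses superharmonicity entirely and is more elementary. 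Both routes rely on the same Green's function estimates already recorded in the paper, so there is no real cost difference, but your version is shorter.
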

\begin{proof} (i) The proof of uniqueness follows \cite{BCMR}. Suppose both $v$ and $w$ are weak solutions of (\ref{poisson}). Let $\phi \in C^\infty_0(\Omega)$ and let $h = G \phi$.  Then $h \in C^2_0 (\overline{\Omega})$ and $- \triangle h = \phi $ on $\Omega$.  Consequently 
$$
\int_\Omega (v-w) \, \phi \ dx = -\int_\Omega (v-w) \, \triangle h \, dx =0, 
$$
by (\ref{weak-defn}).  Since this equation holds for every $\phi \in C^\infty_0(\Omega)$, we obtain $v=w$. 

Next we prove that if $f \in L^1 (\Omega, \delta dx)$ then $u = G f$ is a weak solution. Without loss 
of generality we may assume that $f\ge 0$. By Fubini's theorem and the symmetry of $G$,  
\[ ||u||_{L^1(\Omega, dx)} =  \int_\Omega \int_\Omega G(x,y) \, f(y) \, dy \, dx \]
\[  = \int_\Omega G1 (y) \, f(y) \, dy \leq C  \int_\Omega \delta (y) \, f(y) \, dy  < +\infty, \]
by (\ref{G1est}).  

Let $f_k \in C^\infty_0(\Omega)$ be a sequence of nonnegative functions such that $||f- f_k||_{L^1(\Omega, \delta dx)} \to 0$ as $k \to +\infty$. 
Denote by $u_k=G f_k$ the solution to (\ref{poisson}) with $f_k$ in place of $f$. By Green's theorem, 
\begin{equation}\label{green-1} 
 -\int_\Omega u_k \, \triangle h \, dx = \int_\Omega h \, f_k \, dx, 
 \end{equation}
 for every $h \in C^2_0 (\overline{\Omega})$. Note that 
 $$
 ||(u-u_k)  \triangle h||_{L^1(\Omega, dx)} \le ||\triangle h||_{L^\infty(\Omega)} 
 ||u-u_k||_{L^1(\Omega, dx)},  
 $$
 where by Fubini's theorem 
 $$
  ||u-u_k||_{L^1(\Omega, dx)} = ||G (f-f_k)||_{L^1(\Omega, dx)} \le  C  ||(f-f_k) \delta ||_{L^1(\Omega, dx)} 
  $$
  $$\le C \, ||f-f_k||_{L^1(\Omega, \delta dx)} \to 0. 
 $$
Note that since $h \in C^2_0 (\overline{\Omega})$, we have $|h(x)| \leq C \delta (x)$.  Hence, passing to the limit as $k \to +\infty$ on both sides of (\ref{green-1})  proves that  $u=G f$ is a very weak solution. This proves statement (i) of Lemma \ref{lemma-weak}. 
 
 To prove statement (ii), assume that  $Gf(x_0)<+\infty$ for some $x_0\in \Omega$, where $f \ge 0$ a.e. Since $u=Gf$ is superharmonic in $\Omega$ (see e.g. Theorem 3.3.1 in [AG]), it follows by the mean value inequality that 
 $$
 \frac{1}{|B(x_0,r)|}\int_{B(x_0, r)} Gf(x) \, dx \le Gf(x_0)<+\infty, 
 $$
 for some ball $B(x_0,r)$ such that $0<r<\frac 1 2 \delta(x)$. 
 By Fubini's theorem, 
 $$
 \int_{B(x_0, r)} Gf(x) \, dx = \int_\Omega G \chi_{B(x_0,r)}(y)  f(y) \, dy. 
 $$
 Since $G \chi_{B(x_0,r)}(y) \ge C \, \delta(y)$ for 
 all $y \in \Omega$, it follows that $f \in L^1(\Omega, \delta dx)$. 
 Thus by statement (i), $u=Gf\in L^1(\Omega, dx)$ is a very weak solution of  (\ref{poisson}).  
 \end{proof} 
 
\begin{Rem}\label{Remarkmeasure} We can extend Definition \ref{defveryweaksoln} and Lemma \ref{lemma-weak} to the case where $f$ is replaced with a signed Radon measure $\omega$ on $\Omega$ such that $\int_{\Omega} \delta \, d \omega < \infty$.  In this case, we say that $u \in L^1 (\Omega, \, dx)$ is a very weak solution of 
\begin{equation}\label{poissonmeasure} 
\left\{ \begin{aligned}
-\triangle u & = \omega \,\,  &\mbox{in} \, \,  \Omega, \\
u & = 0\,\, &\mbox{on} \, \,  \partial \Omega
\end{aligned}
\right.  
\end{equation}
if 
\[ -\int_\Omega u \, \triangle h \,  dx = \int_\Omega h \, d \omega,\]
for all $h \in C^2_0 (\overline{\Omega})$.  Then by Theorem 1.2.2 in \cite{MV}, $u (x) = G \omega  (x) = \int_{\Omega} G(x,y) \, d \omega (y)$ is the unique very weak solution of (\ref{poissonmeasure}).  For future reference in \S \ref{riccati}, we note that the proof of Theorem 1.2.2 in \cite{MV} shows that if $\int_{\Omega} \delta \, d \omega < \infty$, then $G \omega \in W^{1,p}_{loc} (\Omega)$ for $1 \leq p < n/(n-1)$.
\end{Rem}
  
We now use the above definition of very weak solutions of the Poisson equation to define very weak solutions of the Schr\"{o}dinger equation.  For the following definition, and subsequent lemma, we do not require $q \geq 0$.

\begin{Def}\label{defveryweak} Let $q \in L^1_{loc}(\Omega, dx)$  and let 
$f \in L^1(\Omega, \delta(x) dx)$. 
A function $u \in L^1(\Omega, dx)\cap L^1(\Omega,  \delta(x) |q(x)| dx)$ is a very weak solution to the Schr\"{o}dinger equation 
\begin{equation}\label{schro-1} 
\left\{ \begin{aligned}
-\triangle u & = q \, u + f\,\,  &\mbox{in} \, \,  \Omega, \\
u & = 0\,\, &\mbox{on} \, \,  \partial \Omega, 
\end{aligned}
\right.  
\end{equation}
if 
\begin{equation}\label{weak-schro} 
-\int_\Omega u \, \triangle h \,  dx = \int_\Omega h \, u  \, q \, dx +
 \int_\Omega h \, f \, dx ,
\end{equation}
for all $h \in C^2_0 (\overline{\Omega})$. 
\end{Def}

Formally, applying the Green's operator $G$ to both sides of the equation $-\triangle u = qu + f$  yields the integral equation 
\begin{equation}\label{weak-int2}
u(x)  = G(qu+f)(x) = \int_{\Omega} G(x,y) u(y) q(y) \, dy + \int_{\Omega} G(x,y) f(y) \, dy. 
\end{equation}
By a solution of (\ref{weak-int2}) we mean a function $u$ such that $u$ and $G(qu+f)$ are finite and equal a.e.
The relationship between very weak solutions of (\ref{schro-1}) and solutions of (\ref{weak-int2}) is made clear by the following lemma.  

\begin{Lemma}\label{integeqnequiv}
Suppose $q \in L^1_{loc}(\Omega, dx), f \in L^1(\Omega, \delta(x) dx)$, and $u \in L^1(\Omega, dx)\cap L^1(\Omega,  \delta(x) |q(x)| dx)$.  Then $u$ is a very weak solution to the Schr\"{o}dinger equation (\ref{schro-1}) if and only if $u$ 
is a solution to the integral equation  $u = G(qu+f)$.
\end{Lemma}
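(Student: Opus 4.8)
The plan is to prove both implications by combining Definition \ref{defveryweak} with the characterization of very weak solutions of the Poisson equation from Lemma \ref{lemma-weak} (and Remark \ref{Remarkmeasure}). The key preliminary observation is that under the hypotheses $u \in L^1(\Omega, dx) \cap L^1(\Omega, \delta |q|\, dx)$ and $f \in L^1(\Omega, \delta\, dx)$, the function $g := qu + f$ satisfies $g \in L^1(\Omega, \delta\, dx)$, since $\int_\Omega \delta\, |g|\, dx \le \int_\Omega \delta\, |q|\, |u|\, dx + \int_\Omega \delta\, |f|\, dx < +\infty$ (the first term is finite precisely because $u \in L^1(\Omega, \delta|q|\, dx)$). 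Hence the Green potential $Gg = G(qu+f)$ is well defined, lies in $L^1(\Omega, dx)$, and by Lemma \ref{lemma-weak}(i) is the unique very weak solution of the Poisson problem $-\triangle w = g$, $w = 0$ on $\partial\Omega$; that is, $-\int_\Omega (Gg)\, \triangle h\, dx = \int_\Omega h\, g\, dx$ for all $h \in C^2_0(\overline{\Omega})$.

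For the forward direction, suppose $u$ is a very weak solution of \eqref{schro-1}. Then \eqref{weak-schro} says exactly that $-\int_\Omega u\, \triangle h\, dx = \int_\Omega h\, (qu + f)\, dx = \int_\Omega h\, g\, dx$ for all $h \in C^2_0(\overline{\Omega})$; note the right-hand side is a finite absolutely convergent integral by the bound $|h(x)| \le C\delta(x)$ for $h \in C^2_0(\overline{\Omega})$ together with $g \in L^1(\Omega, \delta\, dx)$. Thus $u$ is a very weak solution of the Poisson problem with datum $g$, and by the uniqueness statement in Lemma \ref{lemma-weak}(i) we conclude $u = Gg = G(qu+f)$ a.e. Conversely, if $u = G(qu+f)$ a.e., then since $g = qu + f \in L^1(\Omega, \delta\, dx)$, Lemma \ref{lemma-weak}(i) gives that $u = Gg$ is a very weak solution of $-\triangle u = g$, i.e.\ $-\int_\Omega u\, \triangle h\, dx = \int_\Omega h\, g\, dx = \int_\Omega h\, u\, q\, dx + \int_\Omega h\, f\, dx$ for all $h \in C^2_0(\overline{\Omega})$, which is precisely \eqref{weak-schro}; together with the membership $u \in L^1(\Omega, dx) \cap L^1(\Omega, \delta|q|\, dx)$ assumed in the hypothesis, this shows $u$ is a very weak solution of \eqref{schro-1}.

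The only point requiring genuine care — and the step I expect to be the main (minor) obstacle — is verifying that $g = qu + f$ lies in $L^1(\Omega, \delta\, dx)$, so that Lemma \ref{lemma-weak} applies; this is where the integrability hypothesis $u \in L^1(\Omega, \delta|q|\, dx)$ built into Definition \ref{defveryweak} is essential, and it is also what guarantees that the product $h\, u\, q$ is integrable against $dx$ in \eqref{weak-schro} (again using $|h| \le C\delta$). Everything else is a direct quotation of Lemma \ref{lemma-weak}(i), in particular its uniqueness clause, applied with right-hand side $g$. No regularity or sign assumption on $q$ is used, consistent with the remark preceding Definition \ref{defveryweak}.
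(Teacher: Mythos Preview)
Your proof is correct and follows essentially the same approach as the paper: both reduce the equivalence to Lemma~\ref{lemma-weak}(i) applied with right-hand side $g = qu + f \in L^1(\Omega, \delta\,dx)$. The only cosmetic difference is that for the direction ``very weak solution $\Rightarrow$ integral equation'' the paper re-runs the uniqueness argument explicitly (testing against $h = G\phi$ for $\phi \in C^\infty_0(\Omega)$), whereas you simply invoke the uniqueness clause of Lemma~\ref{lemma-weak}(i); these are the same argument.
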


\begin{proof}
Suppose $h \in C^2_0 (\overline{\Omega})$.  By our assumptions, $qu \in L^1 (\Omega, \delta \, dx)$.  Hence by Lemma (\ref{lemma-weak}), 
\[ 
- \int_{\Omega} G(qu) \triangle h \, dx = \int_{\Omega}  qu h\, dx,  \qquad -\int_{\Omega} G(f) \triangle h  \, dx =  \int_{\Omega}  f h\, dx . 
\]

If we assume $u= G(qu+f) $ a.e., then 
\[  - \int_{\Omega} u \triangle h \, dx = - \int_{\Omega} G(qu + f) \triangle h \, dx = \int_{\Omega} (qu+f) h \, dx, \]
for all $h \in C^2_0 (\overline{\Omega})$, so $u$ is a very weak solution of (\ref{schro-1}).  Conversely, suppose $u$ is a very weak solution of (\ref{schro-1}).
For any $\phi \in C^{\infty}_0 (\Omega)$, then $h = G\phi$ satisfies $h \in C^2_0 (\overline{\Omega})$ and $- \triangle h = \phi$.  Hence 
\[  \int_{\Omega} u \phi \, dx = - \int_{\Omega} u \triangle h \, dx =  \int_{\Omega} h (qu+f) \, dx \]
\[ = - \int_{\Omega} G(qu+f) \triangle h \, dx =  \int_{\Omega} G(qu+f) \phi \, dx . \] 
Since $\phi \in C^{\infty}_0 (\Omega)$ is arbitrary, $u = G(qu+f) $ a.e.
\end{proof}

We now return to our standing assumption that $q \geq 0$.  The following Corollary will be useful.

\begin{Cor}\label{ident}
Suppose $f \in L^1(\Omega, \delta dx)$ and $f \geq 0$.  Suppose $u \geq 0$ satisfies $u = G(qu +f)$.  
Then $u \in L^1(\Omega, dx) \cap  L^1(\Omega, \delta q dx)$.

\end{Cor}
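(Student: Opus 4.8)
The plan is to exploit the monotone structure of the iteration $u = G(qu+f)$ together with the $L^1(\Omega,\delta\,dx)$ estimate for the Green's potential from \eqref{G1est} and Lemma \ref{lemma-weak}. Since $u\ge 0$, $q\ge 0$, and $f\ge 0$, the identity $u = G(qu+f) = G(qu) + G(f)$ shows $u \ge G(f)$; but the real content is to bound $\|u\|_{L^1(\Omega,dx)}$ and $\|qu\|_{L^1(\Omega,\delta\,dx)}$ from above. First I would test the identity against the indicator of a small ball: fix $x_0\in\Omega$ and $0<r<\tfrac12\delta(x_0)$. Integrating $u = G(qu+f)$ over $B(x_0,r)$ and using Fubini together with the symmetry of $G$ gives
\[
\int_{B(x_0,r)} u(x)\,dx = \int_\Omega G\chi_{B(x_0,r)}(y)\,\bigl(q(y)u(y)+f(y)\bigr)\,dy.
\]
Since $u\in L^1(\Omega,dx)$ by hypothesis, the left side is finite, and since $G\chi_{B(x_0,r)}(y)\ge c\,\delta(y)$ for all $y\in\Omega$ (a consequence of the lower Green's function estimate $G(x,y)\ge c\,\delta(x)\delta(y)$, already used in the proof of Lemma \ref{lemma-weak}(ii)), we conclude
\[
c\int_\Omega \delta(y)\,\bigl(q(y)u(y)+f(y)\bigr)\,dy \le \int_{B(x_0,r)} u(x)\,dx < +\infty.
\]
Because $f\ge 0$ and $q u\ge 0$, both $\delta f$ and $\delta q u$ are separately integrable; in particular $qu\in L^1(\Omega,\delta\,dx)$ — but this is $\|qu\|_{L^1(\Omega,\delta q\,dx/q)}$-type data, so I should be slightly careful: what I directly obtain is $\int_\Omega \delta q u\,dx<\infty$. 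To extract $\int_\Omega \delta q\,dx<\infty$ — i.e. $u\in L^1(\Omega,\delta q\,dx)$ in the notation of the statement, which means $qu\in L^1(\Omega,\delta\,dx)$ — note that $\int_\Omega \delta q u\,dx<\infty$ is exactly the assertion $qu\in L^1(\Omega,\delta\,dx)$, which is what the Corollary's ``$u\in L^1(\Omega,\delta q\,dx)$'' abbreviates. So that half is done.

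Next, for the bound $u\in L^1(\Omega,dx)$: this is already a hypothesis, so there is nothing to prove there; the only genuine conclusion is $qu\in L^1(\Omega,\delta\,dx)$, established above. Alternatively, if one does not wish to assume $u\in L^1(\Omega,dx)$ a priori but only that $u$ is finite a.e. and $u = G(qu+f)$ holds a.e., then I would run the argument in the reverse direction: apply Lemma \ref{lemma-weak}(ii) to the nonnegative function $qu+f$, observing that $G(qu+f)(x_0) = u(x_0)<+\infty$ for a.e. $x_0$, which forces $qu+f\in L^1(\Omega,\delta\,dx)$ and hence $u = G(qu+f)\in L^1(\Omega,dx)$ by Lemma \ref{lemma-weak}(i); since $f\ge 0$ is already in $L^1(\Omega,\delta\,dx)$, subtracting gives $qu\in L^1(\Omega,\delta\,dx)$. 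This is the cleaner route and is essentially a direct citation of Lemma \ref{lemma-weak}(ii) applied to the composite right-hand side.

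The only point requiring a moment's care — and the place I expect the argument to feel least mechanical — is the nonnegativity bookkeeping that lets one split a single integrability bound on $qu+f$ into separate bounds on $qu$ and on $f$, which is immediate because all three quantities are nonnegative a.e. and $f\in L^1(\Omega,\delta\,dx)$ is given. No compactness, regularity, or PDE machinery is needed beyond the two-sided Green's function bounds already invoked in Section \ref{weak}; the whole proof is a Fubini computation plus an appeal to Lemma \ref{lemma-weak}.
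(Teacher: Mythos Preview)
Your ``alternative'' route---apply Lemma~\ref{lemma-weak}(ii) to the nonnegative function $qu+f$ at a point $x_0$ where $u(x_0)=G(qu+f)(x_0)<\infty$, conclude $qu+f\in L^1(\Omega,\delta\,dx)$, then cite Lemma~\ref{lemma-weak}(i) to get $u=G(qu+f)\in L^1(\Omega,dx)$ and split off $qu\in L^1(\Omega,\delta\,dx)$ by nonnegativity---is correct and is essentially the paper's proof. (The paper applies Lemma~\ref{lemma-weak}(ii) to $qu$ alone after noting $G(qu)(x_0)\le u(x_0)$, then integrates $u=G(qu+f)$ over all of $\Omega$ using $G1\approx\delta$; the content is the same.)

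Your first approach, however, contains a misreading that you should not leave in the write-up: $u\in L^1(\Omega,dx)$ is part of the \emph{conclusion} of the Corollary, not a hypothesis. The only assumption on $u$ is $u\ge 0$ and $u=G(qu+f)$ a.e., which in the paper's convention merely means both sides are finite a.e.\ and agree a.e. So the sentence ``Since $u\in L^1(\Omega,dx)$ by hypothesis, the left side is finite'' is circular. The ball-integration computation could be salvaged by invoking the superharmonicity of $u=G(qu+f)$ and the mean-value inequality to bound $\int_{B(x_0,r)}u\,dx\le |B(x_0,r)|\,u(x_0)<\infty$---but that is exactly how Lemma~\ref{lemma-weak}(ii) itself is proved, so at that point you are re-deriving the lemma you later cite. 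Drop the first approach and keep the second; it is the cleaner argument and matches the paper.
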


\begin{proof}
By assumption, $u< \infty$ a.e.  In particular, $u(x_0) < \infty$ for some $x_0 \in \Omega$.  Since $Gf\geq 0$, we have that $G(qu) (x_0) < \infty$. By Lemma \ref{lemma-weak} (ii), we have $qu \in L^1 (\Omega, \delta \, dx)$, or $u \in L^1 (\Omega, \delta q\, dx)$. 
 
We integrate the equation $u = G(qu+f)$ over $\Omega$.  Since all terms are nonnegative, Fubini's theorem gives 
\[ \int_\Omega u(x) dx = \int_\Omega u(x) \, G1(x)  \, q(x) \, d x +  \int_\Omega G1(x) \,  f(x) dx \]
\[ \approx \int_\Omega u(x) \, \delta(x)  \, q(x) \, d x +  \int_\Omega \delta(x) \,  f(x) dx < \infty   \]
since $u \in L^1 (\Omega, \delta q \, dx)$.  Hence $u \in L^1 (\Omega, dx)$.
\end{proof}

\begin{Lemma}\label{finiteimpliessoln}
Suppose $f \geq 0$ and $\mathcal{G}(f) < \infty$ a.e.  Then $u = \mathcal{G} f $ satisfies $u \in L^1 (\Omega, dx) \cap L^1 (\Omega, \delta q \, dx), u (x)= G(qu+f)(x)$ for all $x \in \Omega$, and $u$ is a very weak solution of (\ref{schro-1}).
\end{Lemma}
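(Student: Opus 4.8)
The plan is to derive the integral equation $u=G(qu+f)$ directly from the series definition \eqref{defGreenfcnSchr} of the minimal Green's function, and then to feed it into the results already established in this section — Corollary \ref{ident} and Lemma \ref{integeqnequiv} — to obtain the integrability and the very-weak-solution property.

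First I would record the kernel identity that drives everything. Write $G_j f(x)=\int_\Omega G_j(x,y)f(y)\,dy$, so that $\mathcal{G}f=\sum_{j\ge 1}G_j f$ by Tonelli (all terms are nonnegative since $q\ge 0$ and $f\ge 0$). An easy induction from \eqref{defGj} and the symmetry of $G$ shows that each $G_j(x,y)$ is symmetric, and combining this with \eqref{defGj} gives, for all $x,y\in\Omega$,
\[
\int_\Omega G(x,z)\,q(z)\,G_j(z,y)\,dz \;=\; G_{j+1}(x,y).
\]
With this in hand, repeated use of Tonelli (legitimate because every integrand is a nonnegative measurable function) yields, for every $x\in\Omega$,
\[
G(q\,\mathcal{G}f)(x)=\int_\Omega G(x,z)\,q(z)\sum_{j\ge 1}G_j f(z)\,dz=\sum_{j\ge 1}G_{j+1}f(x)=\sum_{j\ge 2}G_j f(x)=\mathcal{G}f(x)-G_1 f(x).
\]
Since $G_1 f=Gf$ and $G$ is additive on nonnegative functions, this is exactly $\mathcal{G}f=Gf+G(q\,\mathcal{G}f)=G(q\,\mathcal{G}f+f)$, valid as an identity of $[0,+\infty]$-valued functions; in particular $u(x)=G(qu+f)(x)$ for \emph{all} $x\in\Omega$, no finiteness of $u$ being used at this stage.

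To finish, note that $Gf\le \mathcal{G}f=u<+\infty$ a.e., so $Gf(x_0)<+\infty$ for some $x_0\in\Omega$ and hence $f\in L^1(\Omega,\delta\,dx)$ by Lemma \ref{lemma-weak}(ii). Now $u\ge 0$, $f\ge 0$, $f\in L^1(\Omega,\delta\,dx)$, and $u=G(qu+f)$, so Corollary \ref{ident} gives $u\in L^1(\Omega,dx)\cap L^1(\Omega,\delta q\,dx)$; and since $q\ge 0$ (so $|q|=q$), Lemma \ref{integeqnequiv} converts the integral equation $u=G(qu+f)$ into the assertion that $u$ is a very weak solution of \eqref{schro-1} in the sense of Definition \ref{defveryweak}. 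The only real content is the two displayed identities above — essentially that iterating $g\mapsto G(qg)$ against $Gf$ reproduces the kernels $G_j$ — together with the bookkeeping needed to keep all manipulations inside $[0,+\infty]$ so that no illegitimate cancellation occurs; this is the one place to be careful, and the symmetry of $G$ (hence of each $G_j$) is the tool that makes it routine.
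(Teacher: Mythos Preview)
Your proof is correct and follows essentially the same route as the paper: derive the pointwise identity $\mathcal{G}f = Gf + G(q\,\mathcal{G}f)$ from the series definition via Tonelli, use $Gf\le\mathcal{G}f<\infty$ a.e.\ together with Lemma~\ref{lemma-weak}(ii) to get $f\in L^1(\Omega,\delta\,dx)$, and then invoke Corollary~\ref{ident} and Lemma~\ref{integeqnequiv}. The only cosmetic difference is that you obtain the recursion $\int_\Omega G(x,z)q(z)G_j(z,y)\,dz=G_{j+1}(x,y)$ by first proving symmetry of $G_j$, whereas the paper writes this alternative form of \eqref{defGj} down directly; the substance is identical.
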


\begin{proof}
We first observe that $G f \leq \mathcal{G}f < +\infty$ a.e., and hence $f \in L^1(\Omega, \delta dx)$ and $Gf \in L^1 (\Omega, \, dx)$, by Lemma \ref{lemma-weak} (ii).  Hence  $Gf$ is finite a.e.  Note that for $j \geq 2$,
\[ \int_{\Omega} G_j (x,y) f(y) \, dy = \int_{\Omega} \int_{\Omega} G(x,z) G_{j-1} (z,y) q(z) \, dz \, f(y) \, dy  \]
\[ = \int_{\Omega} G(x,z) q(z) \int_{\Omega} G_{j-1} (z,y) f(y) \, dy \, dz\]
\[ = G \left( q \int_{\Omega} G_{j-1} (\cdot,y) f(y) \, dy \right) (x)   \] 
by Fubini's theorem. Hence 
\[  \mathcal{G} f(x) = Gf(x) + \sum_{j=2}^{\infty} \int_{\Omega} G_j (x,y) f(y) \, dy \]
\[ =  Gf(x) +  G \left( q \int_{\Omega} \sum_{j=2}^{\infty} G_{j-1} (\cdot,y) f(y) \, dy \right) (x)  = G (f + q \, \mathcal{G}f) (x), \]
or $u (x)= G(qu + f)(x) $, for all $x \in \Omega$.  Since $u$ and $Gf$ are finite a.e., so is $G(qu)$. Hence by Corollary \ref{ident}, $u \in L^1 (\Omega, dx) \cap L^1 (\Omega, \delta q \, dx)$.  By Lemma \ref{integeqnequiv}, $u$ is a very weak solution of (\ref{schro-1}).
\end{proof}

Positive very weak solutions of the Schr\"{o}dinger equation are in general not unique (see \cite{Mur}).  However, if $f \geq 0$ and (\ref{weak-int2}) has a nonnegative solution, then $\mathcal{G}$ is the minimal solution, in the sense that if $u\geq 0$ satisfies (\ref{weak-int2}) then $\mathcal{G} f(x) \leq u(x)$ for a.e. $x$.  To see this fact, define $G_j f(x) = \int_{\Omega} G_j (x,y) f(y) \, dy$ for $G_j(x,y)$ defined by (\ref{defngreenpot}) and (\ref{defGj}), and define $Tg = G(g q)$.  In the proof of the previous lemma, we showed that $G_j f = T(G_{j-1} f)$. Hence, substituting $G(uq+f) = Tu + Gf$ for $u$ repeatedly,
\[  u =  Tu + Gf = T( Tu + Gf) + Gf \]
\[ = T^2 u + T(Gf) + Gf = T^2 u + G_2 f + Gf. \]
Iterating, we obtain $u = T^k u + \sum_{j=1}^k G_j f$, and letting $k \rightarrow \infty$ shows that $u \geq \mathcal{G} (f)$.
Hence $\mathcal{G} (f)$ is called the \textit{minimal very weak solution} of (\ref{schro-1}).  Thus, the only issue regarding the existence of a very weak solution of (\ref{schro-1}) is whether $\mathcal{G} (f) < \infty $ a.e. 

We adapt Definition \ref{defveryweak} of a very weak solution to the case of non-zero boundary conditions.   If $g \in L^1 (\partial \Omega, \, d \sigma)$, then $P(g)$, the Poisson integral of $g$, defined $P(g) (x) = \int_{\partial \Omega} P(x,y) g(y) \, d \sigma (y)$, is harmonic on $\Omega$ and has boundary values $g(y)$ $\sigma$-a.e. The following definition does not require $q \geq 0$.

\begin{Def}\label{defveryweak2} Let $q \in L^1_{loc}(\Omega, dx), f \in L^1(\Omega, \delta(x) dx), g \in L^1 (\partial \Omega, \, d \sigma)$, and $q P(g) \in L^1 (\delta \, dx)$.   
A function $u \in L^1(\Omega, dx)\cap L^1(\Omega,  \delta(x) |q(x)| dx)$ is a very weak solution of the Schr\"{o}dinger equation 
\begin{equation} \label{defnveryweakinhom}  \left\{ \begin{aligned}
-\triangle u & = q \, u + f\,\,  &\mbox{in} \, \,  \Omega, \\
u & = g \,\, &\mbox{on} \, \,  \partial \Omega, 
\end{aligned}
\right.  
\end{equation}
if $u = v + P(g)$, where $v$ is a very weak solution of 
\[ \left\{ \begin{aligned}
-\triangle v & = q \, v + f	 + q P(g)\,\,  &\mbox{in} \, \,  \Omega, \\
v & = 0 \,\, &\mbox{on} \, \,  \partial \Omega . 
\end{aligned}
\right.  \]
\end{Def}

Our definition is not entirely standard, but it is  equivalent to the standard definition (see e.g., Definition 1.1.2 in \cite{MV}) since both of them result in  the integral representation $u=G(qu) +G(f) +P(g)$.

In the case of (\ref{dirichlet}), we have $f=0$ and $g=1$ in (\ref{defnveryweakinhom}).  Then $P(1)=1$, so any very weak solution of  (\ref{dirichlet}) has the form $u= v+1$, where $v$ is a very weak solution of $-\triangle v = qv + q$ on $\Omega$, $v=0$ on $\partial \Omega$.
If we assume $q \in L^1 (\Omega, \delta \, dx)$ then Lemma \ref{integeqnequiv} gives that $v \in L^1 (\Omega, dx) \cap L^1 (\Omega, \delta |q| \, dx)$ is a very weak solution of $-\triangle v = qv + q$ on $\Omega$, $v=0$ on $\partial \Omega$ if and only if $v$ is a solution of the integral equation $v = G(qv + q)$.  Hence $u \in  L^1 (\Omega, dx) \cap L^1 (\Omega, \delta |q| \, dx)$ is a very weak solution of (\ref{dirichlet}) if and only $u$ is a solution of the integral equation $u= 1 + G (qu)$.  

We now return to the assumption that $q \geq0$.  As for Lemma \ref{finiteimpliessoln}, the only issue regarding whether the formal solution $u_1$ in (\ref{u1}) yields a very weak solution to (\ref{dirichlet}) is whether the expression in (\ref{u1}) is finite a.e.  

\begin{Lemma}\label{finiteimpliessoln2}
Suppose  $u_1 = 1 + \mathcal{G}(q) < \infty$ a.e.  Then $u_1$ is a very weak solution of (\ref{dirichlet}).
\end{Lemma}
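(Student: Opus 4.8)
The plan is to reduce the statement to Lemma~\ref{finiteimpliessoln} applied with $f=q$, combined with the reformulation of Definition~\ref{defveryweak2} for nonzero boundary data.

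First I would record the integrability consequences of the hypothesis. Since $q\ge 0$ and $Gq\le \mathcal{G}q<+\infty$ a.e., Lemma~\ref{lemma-weak}(ii) gives $q\in L^1(\Omega,\delta\,dx)$ and $Gq\in L^1(\Omega,dx)$. In particular $\int_\Omega \delta q\,dx<+\infty$, so the constant function $1$ lies in $L^1(\Omega,\delta q\,dx)$, and, since $P(1)\equiv 1$, the requirement $qP(g)\in L^1(\delta\,dx)$ in Definition~\ref{defveryweak2} holds with $g=1$. Next, I would invoke Lemma~\ref{finiteimpliessoln} with $f=q$: its hypothesis ($f\ge 0$ and $\mathcal{G}(f)<+\infty$ a.e.) is exactly our assumption, and it produces $v:=\mathcal{G}(q)\in L^1(\Omega,dx)\cap L^1(\Omega,\delta q\,dx)$ satisfying $v=G(qv+q)$ pointwise and being a very weak solution of $-\triangle v=qv+q$ in $\Omega$, $v=0$ on $\partial\Omega$, i.e.\ of \eqref{schro-1} with $f$ replaced by $q$.

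Finally I would unwind Definition~\ref{defveryweak2} for \eqref{dirichlet}, that is, for \eqref{defnveryweakinhom} with $f=0$, $g=1$. By that definition a function is a very weak solution of \eqref{dirichlet} exactly when it has the form $v+P(1)=v+1$ with $v$ a very weak solution of $-\triangle v=qv+q$, $v=0$ on $\partial\Omega$. Since $u_1=1+\mathcal{G}(q)=1+v$ and $v$ is such a solution by the previous step (and $u_1\in L^1(\Omega,dx)\cap L^1(\Omega,\delta q\,dx)$ from the integrability above), $u_1$ is a very weak solution of \eqref{dirichlet}. Equivalently, one can argue purely through the integral equation: $G(qu_1)=G(q)+G(q\mathcal{G}q)$, while Lemma~\ref{finiteimpliessoln} gives $\mathcal{G}q=G(q\mathcal{G}q)+G(q)$, so $G(qu_1)=\mathcal{G}q=u_1-1$; thus $u_1=1+G(qu_1)$, and by Lemma~\ref{integeqnequiv} together with the discussion preceding this lemma, $u_1$ is a very weak solution of \eqref{dirichlet}.

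I do not expect a genuine obstacle here: all the analytic content is already carried by Lemmas~\ref{lemma-weak}, \ref{integeqnequiv}, and \ref{finiteimpliessoln}, and this lemma is essentially bookkeeping on top of Lemma~\ref{finiteimpliessoln}. The only points requiring a little care are checking the integrability hypotheses ($q\in L^1(\Omega,\delta\,dx)$ and $1\in L^1(\Omega,\delta q\,dx)$) so that Definition~\ref{defveryweak2} and Lemma~\ref{integeqnequiv} legitimately apply, and noting $u_1\in L^1(\Omega,dx)$, which follows from $v\in L^1(\Omega,dx)$ and the boundedness of $\Omega$.
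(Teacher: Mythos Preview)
Your proposal is correct and follows exactly the paper's approach: apply Lemma~\ref{finiteimpliessoln} with $f=q$ to obtain that $v=\mathcal{G}(q)$ is a very weak solution of $-\triangle v=qv+q$ with $v=0$ on $\partial\Omega$, and then invoke Definition~\ref{defveryweak2} with $g=1$ (so $P(g)=1$) to conclude that $u_1=1+v$ is a very weak solution of \eqref{dirichlet}. Your additional verification of the integrability hypotheses is a helpful elaboration, but the paper's proof is just the one-line appeal to Lemma~\ref{finiteimpliessoln}.
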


\begin{proof} 
By Lemma \ref{finiteimpliessoln}, $v = \mathcal{G} (q) $ is a very weak solution of $-\triangle v = qv + q$ on $\Omega$, $v=0$ on $\partial \Omega$.  
\end{proof} 

In fact, $u_1$ is the minimal positive weak solution of (\ref{dirichlet}).  To see this fact, suppose $u \geq 0$ is a positive weak solution of (\ref{dirichlet}).  The equation $u = 1 + G(qu)$ shows that $u \geq 1$.  Hence $v = u-1$ is a positive solution to the integral equation $v = G(qv + q)$, hence a positive very weak solution to (\ref{schro-1}) with $f=q$.  By the minimality of the positive solution $ \mathcal{G} (q)$ of (\ref{schro-1}) with $f=q$, we have $\mathcal{G} (q)  \leq v$, and hence $u_1 = 1 + \mathcal{G} (q) \leq 1+ v = u$.

\bgp

\section{Positive Solutions to Schr\"{o}dinger Equations}\label{applications}

\bgp
   
In Lemma \ref{integeqnequiv}, we reduced the solution (in the very weak sense) of (\ref{schro-1}) to the associated integral equation $u = G(qu) + Gf $.  We define the integral operator $T$ by  
\begin{equation}\label{operator-tf}
T f (x) =G(fq)(x) = \int_\Omega G (x,y) \, f(y) q(y) \, dy, \quad x \in \Omega.   
\end{equation}  
Our first step in proving Theorem \ref{u0thm} is to relate condition (\ref{impliesnormcond2}) to the norm of $T$ on $L^2 (\Omega, q \, dx)$.

\begin{Lemma}\label{leastconst} Suppose $\Omega \subset \R^n$, for $n \ge 1$,  
is a bounded $C^2$ domain. Then $T$ maps $L^2 (q \, dx)$ to itself boundedly if and only if (\ref{impliesnormcond2}) holds for some $\beta$, and $||T||_{L^2(\Omega, q \, dx) \to L^2(\Omega, q \, dx)}=\beta^2$, where $\beta$ is the least constant in (\ref{impliesnormcond2}). 
\end{Lemma}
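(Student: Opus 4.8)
\textit{Proof proposal.} The plan is to recognize $T$ as $\iota\iota^{*}$, where $\iota$ is the natural embedding of $L^{1,2}_0(\Omega)$ into $L^2(\Omega, q\,dx)$, and then to invoke the elementary Hilbert--space identity $\|AA^{*}\| = \|A\|^{2}$. \emph{Step 1 (reformulation).} Equip $L^{1,2}_0(\Omega)$ with the inner product $\langle u,v\rangle = \int_\Omega \nabla u\cdot\nabla v\, dx$, so that it is a Hilbert space. Since $C^\infty_0(\Omega)$ is dense in $L^{1,2}_0(\Omega)$ and, for $h_k\to h$ in $L^{1,2}_0(\Omega)$, a subsequence converges a.e.\ (by the Sobolev embedding of $L^{1,2}_0(\Omega)$ into some $L^p$), Fatou's lemma applied to the nonnegative functions $h_k^2 q$ shows that (\ref{impliesnormcond2}) holds for all $h\in C^\infty_0(\Omega)$ with a given $\beta$ if and only if it holds for all $h\in L^{1,2}_0(\Omega)$, with the same least constant. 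Equivalently, the inclusion $\iota\colon L^{1,2}_0(\Omega)\to L^2(\Omega, q\,dx)$ is bounded if and only if (\ref{impliesnormcond2}) holds for some $\beta$, and then $\|\iota\| = \beta$, the least constant. Moreover $\iota$ is always a closed operator: if $h_j\to h$ in $L^{1,2}_0(\Omega)$ and $\iota h_j\to g$ in $L^2(\Omega, q\,dx)$, passing to a subsequence convergent a.e.\ on both sides forces $g=h$.

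\emph{Step 2 ($\iota^{*}=T$).} For $f\in L^2(\Omega, q\,dx)$, the element $\iota^{*}f\in L^{1,2}_0(\Omega)$ is characterized by $\int_\Omega \nabla(\iota^{*}f)\cdot\nabla h\, dx = \int_\Omega f h\, q\, dx$ for all $h\in C^\infty_0(\Omega)$, i.e.\ $\iota^{*}f$ is the finite--energy weak solution of $-\triangle(\iota^{*}f)=fq$ in $\Omega$, $\iota^{*}f=0$ on $\partial\Omega$. To identify this with $Tf = G(fq)$, I first note that (\ref{impliesnormcond2}) forces $q\in L^1(\Omega, \delta^2 dx)$: since $\Omega$ is $C^2$, the Lipschitz function $\delta$ lies in $L^{1,2}_0(\Omega)$, and (\ref{impliesnormcond2}) (extended as in Step 1) gives $\int_\Omega \delta^2 q\, dx\le \beta^2\int_\Omega|\nabla\delta|^2 dx\le \beta^2|\Omega|<\infty$. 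Hence for $f\in L^2(\Omega, q\,dx)$ we get $\int_\Omega \delta |f|\, q\, dx\le (\int_\Omega \delta^2 q\, dx)^{1/2}(\int_\Omega f^2 q\, dx)^{1/2}<\infty$, so $fq\in L^1(\Omega, \delta\,dx)$ and $G(fq)$ is well defined. Since every $h\in C^2_0(\overline\Omega)$ lies in $L^{1,2}_0(\Omega)$, integration by parts turns the defining relation for $\iota^{*}f$ into $-\int_\Omega(\iota^{*}f)\triangle h\, dx=\int_\Omega h\, fq\, dx$; that is, $\iota^{*}f\in L^1(\Omega, dx)$ satisfies (\ref{weak-defn}) and so is a very weak solution of $-\triangle v=fq$, $v=0$ on $\partial\Omega$, in the sense of Definition \ref{defveryweaksoln}. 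By the uniqueness part of Lemma \ref{lemma-weak}(i), $\iota^{*}f=G(fq)=Tf$.

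\emph{Step 3 (conclusion).} By Steps 1--2, $T=\iota\iota^{*}$ on $L^2(\Omega, q\,dx)$, so $T$ is bounded if and only if $\iota$ is, i.e.\ if and only if (\ref{impliesnormcond2}) holds for some $\beta$, and in that case $\|T\|_{L^2(\Omega, q\,dx)\to L^2(\Omega, q\,dx)}=\|\iota\iota^{*}\|=\|\iota\|^{2}=\beta^2$, the square of the least constant in (\ref{impliesnormcond2}). If (\ref{impliesnormcond2}) fails for every $\beta$, the same identities, read at the level of closed densely defined operators, show $\iota$, and hence $\iota^{*}=T$, is unbounded, which completes the equivalence.

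I expect the main obstacle to be Step 2: matching the abstract finite--energy solution produced by the Riesz representation theorem with the concrete Green potential $G(fq)$, which is exactly where one must extract just enough integrability of $q$ from (\ref{impliesnormcond2}) and invoke the uniqueness of very weak solutions from \S\ref{weak}. A more hands-on but essentially equivalent argument avoids adjoints: for $\|T\|\le\beta^2$, apply (\ref{impliesnormcond2}) to $h=Tf$ (truncating $f$ to be bounded with compact support so that $fq\in L^2(\Omega, dx)$ and $Tf\in L^{1,2}_0(\Omega)$ by elliptic regularity, then passing to the limit by monotone convergence) together with $\int_\Omega(Tf)^2 q\, dx=\int_\Omega|\nabla Tf|^2 dx=\langle Tf, f\rangle_{L^2(\Omega, q\,dx)}$ and Cauchy--Schwarz; for the reverse inequality, use that $T\ge 0$ is self-adjoint on $L^2(\Omega, q\,dx)$, so $\|Tf\|^2\le\|T\|\langle Tf,f\rangle$, which caps the Rayleigh quotient $\int_\Omega\phi^2 q\,dx\big/\int_\Omega|\nabla\phi|^2 dx$ at $\|T\|$ for every Green potential $\phi=G(fq)$, and then a Dirichlet--principle argument (replacing a test function by the Green potential with the same trace on $\{q>0\}$ does not increase Dirichlet energy) shows that such potentials exhaust the supremum.
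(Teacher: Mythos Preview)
Your approach is correct and rests on the same Hilbert--space structure as the paper's: both exploit that $T$ factors through $L^{1,2}_0(\Omega)$, and the norm identity ultimately comes from $\langle Tf,f\rangle_{L^2(q\,dx)}=\int_\Omega fq\,G(fq)\,dx$. You package this as $T=\iota\iota^{*}$ and invoke $\|\iota\iota^{*}\|=\|\iota\|^{2}$, which is clean and makes the exact equality $\|T\|=\beta^{2}$ immediate; your use of Lemma~\ref{lemma-weak} to identify the abstract $\iota^{*}f$ with the concrete $G(fq)$ is a nice application of the paper's own machinery. The paper instead works on the dual side: using self-adjointness and the energy identity (\ref{dual-Sob11}) it shows $\|T\|=\sup_{f}\|fq\|_{L^{-1,2}}^{2}/\|f\|_{L^{2}(q\,dx)}^{2}$, and then reads off $\|T\|=\beta^{2}$ from the elementary duality between $\|fq\|_{L^{-1,2}}\le\alpha\|f\|_{L^{2}(q\,dx)}$ and $\|h\|_{L^{2}(q\,dx)}\le\alpha\|h\|_{L^{1,2}_0}$. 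The one place where the paper's route is smoother is the converse implication ``$T$ bounded $\Rightarrow$ (\ref{impliesnormcond2}) holds'': the quadratic-form identity $\langle Tf,f\rangle=\|fq\|_{L^{-1,2}}^{2}$ requires no a priori boundedness of $\iota$, so the paper avoids the appeal to closed unbounded operators you gesture at in the last sentence of Step~3, and also avoids the Dirichlet-principle claim in your hands-on alternative (that Green potentials exhaust the Rayleigh supremum), which would itself need justification. Swapping in the paper's duality step at that point would make your argument fully rigorous with no further work.
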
 

\begin{proof} Recall that $L^{1,2}_0 (\Omega)$ is the homogeneous Sobolev space of order 1, that is, the closure of $C^{\infty}_0 (\Omega)$ with respect to the Dirichlet norm 
$\Vert \nabla f \Vert_{L^2 (\Omega, dx)}$.  The dual of $L^{1,2}_0$ is isometrically isomorphic to the space $L^{-1, 2} (\Omega)$ (and vice versa).  For $f \in L^1 (\Omega, dx)$ (or more generally a finite signed measure), we have 
\begin{equation}
\Vert f \Vert_{L^{-1, 2} (\Omega)}^2 = \int_{\Omega} |\nabla G (f)|^2 \, dx = \int_{\Omega} f Gf \, dx \label{dual-Sob11}
\end{equation}
(see \cite{L}, Sec. I.4, Theorem 1.20).  Also note that by duality, the inequality 
\begin{equation}\label{firstSobineq}
\Vert f q \Vert_{L^{-1,2} (\Omega)} \leq \alpha \Vert f \Vert_{L^2 (\Omega, q\, dx)},  \,\,\, \mbox{for all} \,\,\, f\in L^2 (\Omega, q \, dx)
\end{equation}
is equivalent to the inequality
\begin{equation} \label{secSobineq} 
\Vert h\Vert_{L^{2} (\Omega, q\, dx)} \leq \alpha \Vert h \Vert_{L^{1,2}_0 (\Omega)}  \,\,\, \mbox{for all} \,\,\, h \in C^{\infty}_0 (\Omega) .
\end{equation}
For example, if (\ref{firstSobineq}) holds and $h \in C^{\infty}_0 (\Omega)$, then 
\begin{equation*}
\begin{aligned}
 \Vert h\Vert_{L^{2} (\Omega, q\, dx)} & = \alpha \sup_{\{ \phi: \Vert \phi \Vert_{L^2(\Omega, q \, dx)} \leq 1/\alpha\} } \left|\int_{\Omega} h \phi q \, dx  \right| \\
& \leq  \alpha \sup_{\{ \phi: \Vert \phi q \Vert_{L^{-1,2}(\Omega)} \leq 1\} } \left|\int_{\Omega} h \phi q \, dx  \right|   \\ 
& \leq \alpha  \sup_{\{ \psi: \Vert \psi  \Vert_{L^{-1,2}(\Omega)} \leq 1\} } \left|\int_{\Omega} h \psi \, dx  \right| = \alpha \Vert h \Vert_{L_0^{1,2} (\Omega)} .
\end{aligned}
\end{equation*} 

Since $G(x,y)$ is symmetric, $T$ is a self-adjoint operator on $L^2 (\Omega, q \, dx)$, and hence 
\begin{equation*}
 ||T||_{L^2 (\Omega, q \, dx) \rightarrow L^2 (\Omega, q\, dx)} = \sup_{\{ f:\Vert f \Vert_{L^2 (\Omega, q\, dx)} \leq 1 \}} \left| \langle Tf, f \rangle_{L^2 (\Omega, q\,dx)} \right| .
 \end{equation*}
In computing this supremum, we can assume $f \in \mathcal{B} = \{ f \in C_0^{\infty} (\Omega) : \Vert f \Vert_{L^2 (q \, dx)} \leq 1 \} $.  For $f \in \mathcal{B}$, we have that $fq \in L^1 (\Omega)$.  Hence we obtain
\begin{equation}\label{eqnfornormT} 
\begin{aligned}
  ||T||_{L^2 (\Omega, q\, dx) \rightarrow L^2 (\Omega, q\, dx)}  & = 
\sup_{f \in \mathcal{B}} \left| \langle G(fq), fq \rangle_{L^2 (\Omega, \, dx)} \right| \\ & =  \sup_{f \in \mathcal{B}}  \Vert fq \Vert^2_{L^{-1, 2} (\Omega) } , 
\end{aligned}
\end{equation}
by (\ref{dual-Sob11}).  

Suppose (\ref{impliesnormcond2}) holds for all $h \in C_0^{\infty} (\Omega)$.  Since (\ref{secSobineq}) implies (\ref{firstSobineq}), we have 
\[ ||T||_{L^2 (\Omega, q\, dx) \rightarrow L^2 (\Omega, q\, dx)} =  \sup_{f \in \mathcal{B}}  \Vert fq \Vert_{L^{-1, 2} (\Omega) } \leq \beta^2.   \]
Conversely, if $T$ is bounded on $L^2 (q\, dx)$, then by (\ref{eqnfornormT}), we have 
\[
 \Vert f q \Vert^2_{L^{-1,2} (\Omega)} \leq \Vert T \Vert \Vert f \Vert^2_{L^2 (\Omega, q \, dx)},
 \] 
 first for all $f \in C^{\infty}_0 (\Omega)$, but then as a consequence of density, for all $f \in L^2 (\Omega, q \, dx)$. Since (\ref{firstSobineq}) implies (\ref{secSobineq}), we obtain $\Vert h \Vert^2_{L^2 (\Omega, q \, dx) } \leq \Vert T \Vert \Vert h \Vert^2_{L^{1,2} (\Omega)}$ for all $h \in C^{\infty}_0 (\Omega)$.  Hence $\beta^2 \leq \Vert T \Vert_{L^2 (\Omega, q \, dx) \rightarrow L^2 (\Omega, q \, dx)}$.
\end{proof}

The next step utilizes estimates from \cite{FNV}.  In that paper, a general $\sigma$-finite measure space $(X, \, d\omega)$ and integral  operator $T$ defined by $Tf(x) = \int_X K(x,y) f(y) \, d \omega (y)$ are considered.  Here $K : X \times X \rightarrow (0, \infty]$ is symmetric and quasi-metrically modifiable, which means that there exists a measurable function $m: X \rightarrow (0, \infty)$ (the ``modifier"), such that for $\tilde{K} (x,y) = \frac{K(x,y)}{m(x) m(y)}$ we have that $d (x,y) = 1/ \tilde{K}(x,y)$ satisfies the quasi-metric condition
\[ d(x,y) \leq \kappa (d(x,z) + d(z,y))  \]
for some constant $\kappa >0$ and all $x,y, z \in X$.  For $j \geq 2$, we define $K_j (x,y) = \int_X K_{j-1} (x,z) K(z,y) \, d\omega (z)$.  Then the $j^{th}$ iterate $T^j$ of $T$ has the form $T^j f(x) = \int_X K_{j} (x,y) f(y)  \, d \omega (y)$.  The formal solution to the equation $v = Tv + m$ is 
\[
v_0 = m + \sum_{j=1}^{\infty} T^j m , \] 
for a modifier $m$.  Then Corollary 3.5 in \cite{FNV} states that there exists $c>0$ depending only on $\kappa$ such that 
\begin{equation}\label{newu0lowest}
m e^{c (Tm)/m} \leq v_0,   
\end{equation}
and, if in addition $\Vert T \Vert_{L^2 (\omega) \rightarrow L^2 (\omega)} <1$, then there exists a constant $C >0$ depending only on $\kappa$ and $\Vert T \Vert$ such that 
\begin{equation}\label{newu0uppest}
 v_0 \leq m e^{C (Tm)/m} .
\end{equation}

To apply this result to our case, we let $X = \Omega, d \omega = q(y) \, dy$ and $K(x,y) = G(x,y)$. Note that 
\eqref{operator-tf} holds for $T$ defined on $X$ as above.  As noted in \cite{FV}, p. 118, or \cite{FNV}, p. 905, the equivalence \eqref{green-smootha} in the case 
$n \ge 3$ combined with 
\eqref{G1est} 
 shows that $K$ is quasi-metrically modifiable with modifier 
 $m(x) = \delta (x) = \text{dist}(x, \partial \Omega)$. (We take this opportunity to note a misprint in \cite{FNV}: the power of $|x-y| + \delta(x) + \delta (y)$ in equation (1.6) should be $\alpha$, not $\alpha/2$; this error has no bearing on the validity of the results in that paper.)  For $n=2$, it remains true that $m = \delta$ is a modifier for $K$; this fact follows from estimates \eqref{green-smoothb} and 
 \eqref{G1est}  (see \cite{H}, Proposition 8.6 and Corollary 9.6). Then by (\ref{defngreenpot}) and (\ref{defGj}), we have $K_j (x,y) = G_j (x,y)$ for all $j \geq 1$.  Hence 
\[ T^j( G1) (x) = \int_{\Omega} G_j (x,y) q(y) \int_{\Omega} G(y,z) \, dz dy \]
\[ = \int_{\Omega} \int_{\Omega} G_j (x,y) G(y,z) q(y) \, dy \, dz = G_{j+1} 1 (x),  \]
where $G_{j}$ is the integral operator defined by $G_{j} f(x)= \int_{\Omega} G_{j} (y) f(y) \, dy$.  Hence 
\begin{equation}\label{Neumannseries}
  u_0 = \mathcal{G} 1 = \sum_{j=1}^{\infty} G_j 1 = G1 + \sum_{j=1}^{\infty} G_{j+1} 1 =  G1 + \sum_{j=1}^{\infty} T^j (G1).   
\end{equation}
However, we noted in (\ref{G1est}), $G1$ and $\delta$ are pointwise equivalent.  Hence $u_0 \approx \delta + \sum_{j=1}^{\infty} T^j \delta = v_0$.  Therefore by (\ref{newu0lowest}), there exist constants $c_1>0$ and $c>0$ such that
\begin{equation}\label{estlowu0}
 u_0 \geq c_1 \delta e^{c G(q\delta)/\delta},
\end{equation}
and, if we assume $\Vert T \Vert_{L^2 (\Omega, q \, dx) \rightarrow L^2 (\Omega, q \, dx)} < 1$, then (\ref{newu0uppest}) gives the estimate
\begin{equation} \label{estu0}
 u_0 \leq C_1 e^{C G(q \delta)/\delta} ,
\end{equation}
for some constants $C_1 >0$ and $C>0$.  

\begin{proofof}{Theorem \ref{u0thm}}.  First suppose (\ref{impliesnormcond2}) holds for some $\beta\in (0,1)$.   We note that we then have the inequality 
\begin{equation} \label{Mazyacond}
  \int_\Omega h^2  q \, dx \le \beta^2 ||h||^2_{L^{1, 2}_0(\Omega)}  
\end{equation} 
for all $h \in L^{1, 2}_0(\Omega)$, by an approximation argument, as follows.  Let $h_n \in C^{\infty}_0(\Omega)$ be a sequence of functions converging to $h$ in ${L^{1, 2}_0(\Omega)}$.  Then by the Sobolev imbedding theorem, $h_n$ converges to $h$ in $L^{p^*}$ for some $p^{*}\geq 1$, so by passing to a subsequence we can assume $h_n$ converges to $h$ a.e.  Because of (\ref{impliesnormcond2}), $h_n$ is Cauchy in $L^2 (\Omega, q \, dx)$ and hence converges in $L^2 (\Omega, q \, dx)$ to some function $\tilde{h}$.  Since there is a subsequence of $h_n$ converging $q dx$-a.e. to $\tilde{h}$, we must have $\tilde{h} = h$ a.e. with respect to $q dx$.  Hence we can let $n \rightarrow \infty$ in  $\int_\Omega h_n^2  q \, dx \le \beta^2 \int_{\Omega} |\nabla h_n|^2\, dx$ to obtain (\ref{Mazyacond}). 

Observe that $G1 \in C(\overline{\Omega})$, $G1 =0$ on $\partial \Omega$, and, by (\ref{dual-Sob11}) and (\ref{G1est}), 
\[  \int_{\Omega} |\nabla G1 |^2 \, dx = \int_{\Omega} G1 \, dx \leq C \int_{\Omega} \delta (x) \, dx < \infty .  \]
Hence $G1 \in L^{1,2}_0(\Omega)$.  By the remark in the last paragraph, $G1 \in L^2(\Omega, q \, dx)$.  Since $G1 \approx \delta$, this means that $\delta q \in L^1 (\Omega, \delta \, dx)$.  By Lemma \ref{lemma-weak} (i), $G(\delta q) \in L^1 (\Omega, \, dx)$.  In particular, $G (\delta q) < \infty $ a.e.  

By our assumption (\ref{impliesnormcond2}) and Lemma \ref{leastconst}, the operator $T$ defined by (\ref{operator-tf}) has $\Vert T \Vert_{L^2 (\Omega, q \, dx) \rightarrow L^2 (\Omega, q \, dx)} \leq \beta^2 <1$.  Hence by (\ref{estu0}), $u_0=  \mathcal{G} 1 $ satisfies (\ref{u0upperestimate}) and $u_0  < \infty $ a.e. By Lemma \ref{finiteimpliessoln}, $u_0 \in L^1 (\Omega, \, dx) \cap L^1 (\Omega, \, \delta q \, dx)$, and $u_0$ is a positive very weak solution of (\ref{schro-1}).  

Since $\Vert T \Vert_{L^2 (\Omega, q \, dx) \rightarrow L^2 (\Omega, q \, dx)} <1$, the operator $(I-T)^{-1} = \sum_{j=0}^{\infty}T^j$ is bounded on $L^2 (q \, dx)$.  Hence $u_0 \in L^2 (q, \, dx)$, by (\ref{Neumannseries}).  Since $u_0 = G(u_0 q + 1)$, we have 
 \begin{equation*}
\begin{aligned}
\int_\Omega |\nabla u_0|^2 dx & = \int_\Omega |\nabla G(u_0 q +1)|^2 dx \\
& = \int_\Omega G(u_0 q +1) (u_0 q+1) \, dx  = \int_\Omega (u_0^2 q+u_0) \, dx . 
\end{aligned}
 \end{equation*}
by (\ref{dual-Sob11}).  Since $u_0 = G(u_0 q +1)$ is $0$ on $\partial \Omega$, we obtain $u_0 \in L^{1,2}_0 (\Omega)$.  

We now show that $u_0$ is a weak solution of (\ref{u0equation}).  Since $u_0 \in L^{1,2}_0 (\Omega)$, we must show that 
\[  \int_{\Omega} \nabla u_0 \cdot \nabla h \, dx = \int_{\Omega} hu_0 q + h \, dx , \]
for all $h \in L^{1,2}_0 (\Omega)$.  Let $h_n$ be a sequence in $C_0^{\infty} (\Omega)$ converging to $h$ in the norm on $L^{1,2}_0 (\Omega)$.  Then
\[ \int_{\Omega} \nabla u_0 \cdot \nabla h_n \, dx = - \int_{\Omega} u_0  \triangle h_n \, dx   = \int_{\Omega} h_n u_0 q + h_n \, dx  \]
because $u_0$ is a very weak solution of (\ref{u0equation}).  The left side converges as $n \rightarrow \infty$ to $ \int_{\Omega} \nabla u_0 \cdot \nabla h \, dx$, because $h_n$ converges to $h$ in $L^{1,2}_0 (\Omega)$.  By (\ref{Mazyacond}), which we now know is valid for all $h$ in $L^{1,2}_0 (\Omega)$, we have that $h_n$ converges to $h$ in $L^2 (\Omega, q \, dx)$.  We also know that $u_0 \in L^2 (\Omega, q \, dx)$.  Hence using the Cauchy-Schwarz inequality in $L^2 (\Omega, q \, dx)$ we see that $\int_{\Omega} h_n u_0 q \, dx$ converges to $\int_{\Omega} h u_0 q \,dx $.  The imbedding of $L^{1,2}_0 (\Omega, \, dx)$ in $L^1 (\Omega, dx)$ shows that $\int_{\Omega}  h_n \, dx$ converges to $\int_{\Omega} h  \, dx$.  Therefore $u_0$ is a weak solution of (\ref{u0equation}).  

Now suppose $u \in L^1 (\Omega, \, dx) \cap L^1 (\Omega, \, \delta q \, dx)$ and $u$ is a positive very weak solution of (\ref{u0equation}).  By Definition \ref{defveryweaksoln} and Lemma \ref{integeqnequiv}, $u$ satisfies the integral equation $u= G(qu)+ G1 = Tu + G1$ a.e., for $T$ defined by (\ref{operator-tf}).  Since $G1 \geq 0$, we have $T(u) \leq u$ a.e., with $u \geq G1 >0$ and $u < \infty$ a.e.  Hence by Schur's test for integral operators, we have $\Vert T \Vert_{L^2 (\Omega, q \, dx) \rightarrow L^2 (\Omega, q \, dx)} \leq 1$.  By Lemma \ref{leastconst}, it follows that (\ref{impliesnormcond2}) holds with $\beta =1$.  Since $u_0= \mathcal{G} 1 $ is the minimal positive very weak solution of (\ref{u0equation}), we have $u_0 \leq u$, hence  (\ref{u0lowerestimate}) holds because of (\ref{estlowu0}).
\end{proofof}

We turn now to equation (\ref{dirichlet}).  By Lemma \ref{finiteimpliessoln2}, the essential issue is whether $u_1 = 1 + \mathcal{G} (q)$ is finite a.e., 
or equivalently $u_1 \in L^1 (\Omega)$.  We will use the relation between $u_0$ and $u_1$ exhibited by the following simple lemma.

\begin{Lemma}\label{u1finite}
Let $\Omega, q, u_0$, and $u_1$ be as in Theorems \ref{u0thm} and \ref{nonlinthm}.  Then $u_1 \in L^1 (\Omega, dx)$ if and only if $u_0 \in L^1 (\Omega, q \, dx)$.
\end{Lemma}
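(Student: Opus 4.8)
The plan is to exploit the symmetry of the Schr\"odinger Green's kernel $\mathcal{G}(x,y)$ together with Fubini's theorem, all the integrands in sight being nonnegative, so that the equivalence will hold unconditionally (including when both sides equal $+\infty$).

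First I would check that each iterated kernel $G_j(x,y)$ from \eqref{defGj} is symmetric in $(x,y)$. This is clear for $G_1 = G$, and, assuming it for $G_{j-1}$, unwinding \eqref{defGj} writes $G_j(x,y)$ as the integral over $z_1, \dots, z_{j-1} \in \Omega$ of $G(x,z_1)\, q(z_1)\, G(z_1,z_2)\, q(z_2) \cdots q(z_{j-1})\, G(z_{j-1},y)$, which is invariant under $x \leftrightarrow y$ once the order of the intermediate variables is reversed. Hence $\mathcal{G}(x,y) = \sum_{j \ge 1} G_j(x,y)$ in \eqref{defGreenfcnSchr} is symmetric too.

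Next, since $\Omega$ is bounded, $1 \in L^1(\Omega, dx)$, so $u_1 = 1 + \mathcal{G}q \in L^1(\Omega, dx)$ if and only if $\mathcal{G}q \in L^1(\Omega, dx)$. As $\mathcal{G} \ge 0$ and $q \ge 0$, Fubini's theorem gives
\[
\int_\Omega \mathcal{G}q(x)\, dx = \int_\Omega \int_\Omega \mathcal{G}(x,y)\, q(y)\, dy\, dx = \int_\Omega q(y) \left( \int_\Omega \mathcal{G}(y,x)\, dx \right) dy = \int_\Omega q(y)\, \mathcal{G}1(y)\, dy = \int_\Omega q(y)\, u_0(y)\, dy,
\]
where the third equality uses the symmetry of $\mathcal{G}$ and the last uses $u_0 = \mathcal{G}1$ from \eqref{u1}. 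Therefore $\mathcal{G}q \in L^1(\Omega, dx)$ if and only if $u_0 \in L^1(\Omega, q\, dx)$, which is the claim.

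I do not anticipate any real obstacle: the only two things needing care are the (routine) induction showing each $G_j$ is symmetric, and the observation that nonnegativity of all integrands makes the interchange of integration valid in $[0,+\infty]$, so that the stated equivalence is genuinely two-sided even in the degenerate case where both quantities are infinite.
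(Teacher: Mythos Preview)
Your proof is correct and follows essentially the same approach as the paper: both use Fubini's theorem together with the symmetry of $\mathcal{G}(x,y)$ to obtain the identity $\int_\Omega u_1\,dx = |\Omega| + \int_\Omega u_0\,q\,dy$. The only difference is that you spell out the induction showing each $G_j$ is symmetric, whereas the paper simply asserts the symmetry of $\mathcal{G}$.
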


\begin{proof}
Since $u_0 = \mathcal{G}1$, Fubini's theorem and the symmetry of $\mathcal{G} (x,y)$ yield
\begin{equation}\label{u0u1int}
 \int_{\Omega} u_1 \, dx = \int_{\Omega} 1 \, dx + \int_{\Omega} \int_{\Omega} \mathcal{G}(x,y) q(y) \, dy \, dx = |\Omega| + \int_{\Omega} u_0  q \, dy .  
\end{equation} 
\end{proof}

The following convergence lemma will be useful.  

\begin{Lemma}\label{convlemma}
Suppose $\Omega \subset \R^n$ is a bounded $C^2$ domain, for $n \geq 2$.  Suppose $q \in L^1 (\Omega, dx)$ and $q$ has compact support in $\Omega$.  Suppose $\phi \in L^1 (\Omega, q \, dx)$.  Let $z \in \partial \Omega$, and let $\{x_j \}_{j=1}^{\infty}$ be a sequence in $\Omega$ converging to $z$ in the normal direction.  Then
\[ \lim_{j \rightarrow \infty} \frac{G(\phi q)(x_j)}{\delta (x_j)} = \int_{\Omega} P(y,z) \phi (y) q(y) \, dy . \]
\end{Lemma}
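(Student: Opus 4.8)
\textsc{Proof proposal for Lemma \ref{convlemma}.} The plan is to write
\[ \frac{G(\phi q)(x_j)}{\delta(x_j)} = \int_\Omega \frac{G(x_j,y)}{\delta(x_j)}\,\phi(y)\,q(y)\,dy \]
and pass to the limit under the integral sign by dominated convergence. This requires two ingredients: pointwise convergence of the integrand for each fixed $y\in\Omega$, and a $j$-uniform integrable majorant of the integrand on $\supp q$.

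\emph{Pointwise convergence.} I would invoke the classical relation, valid for $C^2$ domains, between the Green's function and the Poisson kernel: for fixed $y\in\Omega$ the function $x\mapsto G(x,y)$ is harmonic in a neighborhood of $\partial\Omega$ and vanishes on $\partial\Omega$, hence extends to a $C^1$ function up to $\partial\Omega$ there (boundary Schauder estimates / Hopf), and
\[ P(y,z)=\lim_{t\to 0^+}\frac{G\big(z-t\,n(z),\,y\big)}{t}, \]
where $n(z)$ is the outward unit normal at $z$; this is just the normal-derivative characterization of $P$ together with the symmetry $G(x,y)=G(y,x)$. Since $\Omega$ is $C^2$, for $j$ large the point $x_j$ lies on the inward normal ray from $z$ with $x_j=z-\delta(x_j)\,n(z)$ and $\delta(x_j)=t_j\to 0^+$ (the nearest boundary point to $x_j$ is $z$), so $G(x_j,y)/\delta(x_j)\to P(y,z)$ for every $y\in\Omega$.

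\emph{Majorant.} Let $K=\supp q\Subset\Omega$ and $d=\text{dist}(K,\partial\Omega)>0$. For $j$ so large that $|x_j-z|<d/2$, every $y\in K$ satisfies $|x_j-y|\ge |z-y|-|z-x_j|\ge d/2$, so by the crude estimate \eqref{green-smoothc},
\[ \frac{G(x_j,y)}{\delta(x_j)} \le \frac{C}{|x_j-y|^{\,n-1}} \le \frac{C}{(d/2)^{\,n-1}} =: M, \qquad y\in K. \]
Hence $\big|\tfrac{G(x_j,y)}{\delta(x_j)}\,\phi(y)\,q(y)\big|\le M\,|\phi(y)|\,q(y)\in L^1(\Omega)$ since $\phi\in L^1(\Omega,q\,dx)$; letting $j\to\infty$ also gives $P(y,z)\le M$ on $K$, so the right-hand side of the asserted identity is finite. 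Dominated convergence then yields the conclusion.

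\emph{Main obstacle.} The only non-routine ingredient is the pointwise limit $G(x_j,y)/\delta(x_j)\to P(y,z)$; once it is in hand, the rest is bookkeeping with \eqref{green-smoothc} and the compact support of $q$. This limit is standard for $C^2$ domains — it follows from interior-to-boundary regularity of harmonic functions vanishing on a $C^2$ portion of the boundary together with the definition of $P$ as a boundary normal derivative of $G$ — so I would either cite it from the references already used for \eqref{green-smootha}--\eqref{green-smoothb} or record the short Hopf/Taylor argument sketched above.
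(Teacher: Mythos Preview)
Your proposal is correct and follows essentially the same route as the paper: write the quotient as an integral, invoke the normal-derivative characterization of the Poisson kernel for the pointwise limit, bound $G(x_j,y)/\delta(x_j)$ uniformly on $\supp q$ via the Green's function estimates, and apply dominated convergence. Your use of \eqref{green-smoothc} for the majorant is in fact slightly cleaner than the paper's citation of \eqref{green-smootha}, since \eqref{green-smoothc} covers both $n\ge 3$ and $n=2$ at once.
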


\begin{proof}  Recall that $P(y,z)$ is the normal derivative of $G(y,x)$ as $x \rightarrow z$, $x \in \Omega$.  Hence $\lim_{j \rightarrow \infty} G(y, x_j) / \delta (x_j) = P(y,z)$.  Since $G$ is symmetric,
\[ \lim_{j \rightarrow \infty} \frac{G(\phi q)(x_j)}{\delta (x_j)} = \lim_{j \rightarrow \infty} \int_{\Omega} \frac{G(y, x_j)}{\delta (x_j)} \phi (y) q(y) \, dy .   \]
There is some constant $c_1>0$ such that $|y-x_j| \geq c_1 $ for all $y$ belonging to the support of $q$ and all sufficiently large $j$.  Hence (\ref{green-smootha}) shows that $G(y, x_j)/\delta (x_j)$ is bounded for all large enough $j$.  The result follows by the dominated convergence theorem.
\end{proof}

We will need an elementary lemma on quasi-metric spaces due to Hansen and Netuka (\cite{HaN}, Proposition 8.1 and Corollary 8.2); in the context of normed spaces it was proved 
earlier by Pinchover \cite{P}, Lemma A.1. 

\begin{Lemma}\label{hansen} Suppose $d$  is a quasi-metric on a set $\Omega$ with quasi-metric constant $\kappa$. Suppose $z \in X$. Then 
\begin{equation}\label{quasi-cond} 
\tilde d(x,y) = \frac{d(x,y)}{d(x,z) \cdot d(y,z)}, \qquad x, y 
\in \Omega \setminus\{z\},
\end{equation} 
is a quasi-metric on $\Omega\setminus\{z\}$ with quasi-metric constant 
$4 \kappa^2$. 
\end{Lemma}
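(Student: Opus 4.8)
The plan is to verify the three quasi-metric axioms for $\tilde d$ on $\Omega \setminus \{z\}$ directly. Symmetry of $\tilde d$ is inherited from $d$, and since $d(x,z) > 0$ and $d(y,z) > 0$ whenever $x, y \neq z$, one has $\tilde d(x,y) = 0$ precisely when $d(x,y) = 0$, i.e. when $x = y$. So the only substantive point is the quasi-triangle inequality with constant $4\kappa^2$: for all $x, y, w \in \Omega \setminus \{z\}$,
\[
\tilde d(x,y) \le 4\kappa^2 \bigl( \tilde d(x,w) + \tilde d(w,y) \bigr).
\]
Multiplying both sides by the positive number $d(x,z)\, d(y,z)$ and using the definition of $\tilde d$, this is equivalent to the inequality
\[
d(x,y) \le \frac{4\kappa^2}{d(w,z)} \Bigl( d(x,w)\, d(y,z) + d(w,y)\, d(x,z) \Bigr),
\]
which I will denote $(\star)$. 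First I would note that $(\star)$ is symmetric under interchanging $x$ and $y$, so one may assume without loss of generality that $d(x,z) \le d(y,z)$, and then split into two cases according to the size of $d(w,z)$ relative to $2\kappa\, d(x,z)$.

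In the first case, $d(w,z) \le 2\kappa\, d(x,z)$, so also $d(w,z) \le 2\kappa\, d(y,z)$; hence $d(y,z)/d(w,z) \ge 1/(2\kappa)$ and $d(x,z)/d(w,z) \ge 1/(2\kappa)$, and the right-hand side of $(\star)$ is at least $2\kappa \bigl( d(x,w) + d(w,y) \bigr)$, which is at least $2\, d(x,y) \ge d(x,y)$ by the quasi-triangle inequality for $d$ (note that no assumption $\kappa \ge 1$ is needed). In the second case, $d(w,z) > 2\kappa\, d(x,z)$; combining this with $d(w,z) \le \kappa \bigl( d(w,x) + d(x,z) \bigr)$ yields first $d(x,z) < d(w,x)$, and then, feeding this back, $d(w,z) < 2\kappa\, d(w,x)$. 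Consequently the first term on the right of $(\star)$ already satisfies
\[
\frac{4\kappa^2}{d(w,z)}\, d(x,w)\, d(y,z) = 4\kappa^2 \, \frac{d(x,w)}{d(w,z)}\, d(y,z) > 2\kappa\, d(y,z),
\]
whereas $d(x,y) \le \kappa \bigl( d(x,z) + d(z,y) \bigr) \le 2\kappa\, d(y,z)$ since $d(x,z) \le d(y,z)$. Either way $(\star)$ holds, so $\tilde d$ is a quasi-metric on $\Omega \setminus \{z\}$ with constant $4\kappa^2$.

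I do not anticipate a genuine obstacle here: the whole computation is short once the case division is chosen correctly. The one delicate point is to split according to whether $d(w,z)$ is comparable to $2\kappa$ times the \emph{smaller} of $d(x,z)$ and $d(y,z)$ — this is exactly what forces $d(w,z)$ to be comparable to $d(w,x)$ in the second case, which is what is needed in order to bound the right-hand side of $(\star)$ from below by a multiple of $d(y,z)$, and hence by $d(x,y)$.
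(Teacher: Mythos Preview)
Your argument is correct: the reduction to $(\star)$ is valid, the WLOG is harmless by symmetry, and both cases go through exactly as you say. In Case~1 the bound $2\kappa(d(x,w)+d(w,y))\ge 2d(x,y)$ uses only the quasi-triangle inequality for $d$, and in Case~2 the chain $d(w,z)\le\kappa(d(w,x)+d(x,z))$ together with $d(x,z)<d(w,x)$ gives $d(w,z)<2\kappa\,d(w,x)$, after which the first term on the right of $(\star)$ alone dominates $2\kappa\,d(y,z)\ge d(x,y)$. Nothing is missing.

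As for comparison with the paper: the paper does not supply a proof of this lemma at all. It merely states the result and attributes it to Hansen and Netuka \cite{HaN} (Proposition~8.1 and Corollary~8.2), noting that in the normed-space setting it goes back to Pinchover \cite{P}. Your self-contained argument is therefore a genuine addition rather than a duplication; it is essentially the same short case analysis one finds in those references, and your observation about why the split must be made relative to the \emph{smaller} of $d(x,z)$ and $d(y,z)$ is exactly the point.
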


\begin{proofof}{Theorem \ref{nonlinthm}}. First suppose (\ref{impliesnormcond2}) holds for some $\beta \in (0,1)$ and (\ref{balayagecond}) holds for the constant $C$ in (\ref{u0upperestimate}).  By Theorem \ref{u0thm}, $u_0 \in  L^1 (\Omega, dx)\cap L^1 ( \Omega, \delta q \, dx)$ and $u_0$ satisfies (\ref{u0upperestimate}).   By Corollary \ref{ident} and Lemma \ref{finiteimpliessoln}, $u_0 = G( u_0 q +1)$ at every point of $\Omega$.

Let $z \in \partial \Omega$.  We claim that
\begin{equation} \label{Poissonineq}
\int_{\Omega} P(y,z) u_0 (y) q(y) \, dy \leq C_1 e^{C P^* (\delta q) (z)},
\end{equation} 
where $C_1$ and $C$ are the constants from (\ref{u0upperestimate}).  

We first prove this claim under the additional assumption that $q$ is compactly supported in $\Omega$, so $q \in L^1 (\Omega, \, dx)$.  Since $\delta$ is bounded above and below, away from $0$, on the support of $q$, the condition $u_0 \in L^1 (\Omega, \delta q)$ is equivalent to the condition $u_0 \in L^1 (\Omega, q\, dx)$.  Let $\{x_j \}_{j=1}^{\infty}$ be a sequence in $\Omega$ converging to $z$ in the normal direction. Applying Lemma \ref{convlemma} with $\phi = u_0$, we obtain 
\[ \int_{\Omega} P(y,z) u_0 (y) q(y) \, dy  = \lim_{j \rightarrow \infty} \frac{G(u_0 q) (x_j)}{\delta (x_j)} . \]
Applying the equation $u_0 = G(u_0 q +1)$ and (\ref{u0upperestimate}), 
\[ \frac{G(u_0 q) (x_j)}{\delta (x_j)} \leq \frac{u_0 (x_j)}{\delta (x_j)} \leq C_1 e^{C G(\delta q)(x_j)/\delta (x_j)}.  \]
Taking the limit and applying Lemma \ref{convlemma} with $\phi = \delta \in L^1 (\Omega, q \, dx)$ gives 
\[ \int_{\Omega} P(y,z) u_0 (y) q(y) \, dy \leq C_1 e^{C \int_{\Omega} P(y,z) \delta (y) q(y) \, dy}   = C_1 e^{C P^* (\delta q) (z)} .\]

We now remove the assumption that $q$ is compactly supported in $\Omega$.  Let $q \in L^1_{loc} (\Omega)$.  Let $\{\Omega_k \}_{k=1}^{\infty}$ be an exhaustion of $\Omega$ by smooth subdomains with compact closure such that $\overline{{\Omega}}_{k+1} \subset \Omega_k$, $k=1,2, \ldots$.  Define $q_k = q \chi_{\Omega_k}$.  Then each $q_k$ has compact support in $\Omega$.  Define the iterated Green's kernels $G_j^{(k)} (x,y)$ for $j=1, 2, 3, \dots$, and $\mathcal{G}^{(k)}$, as in (\ref{defngreenpot}), (\ref{defGj}), and (\ref{defGreenfcnSchr}), except with $q$ replaced by $q_k$.  Let $u_k = \mathcal{G}^{(k)} 1$.  By repeated use of the monotone convergence theorem, $G_j^{(k)} (x,y)$ increases monotonically as $k \rightarrow \infty$ to $G_j (x,y)$ for each $j$, $\mathcal{G}^{(k)}(x,y)$ increases monotonically to $\mathcal{G} (x,y)$, and $u_k$ increases monotonically to $u_0$.  Applying the compact support case gives  
\[ \int_{\Omega} P(y,z) u_k (y) q_k(y) \, dy \leq C_1 e^{C P^* (\delta q_k) (z)} \leq  C_1 e^{C P^* (\delta q) (z)}  . \]
Then the monotone convergence theorem yields (\ref{Poissonineq}).

We integrate  (\ref{Poissonineq}) over $\partial \Omega$:
\[ \int_{\partial \Omega} \int_{\Omega}   P(y,z) u_0 (y) q(y) \, dy d\sigma (z) \leq   C_1\int_{\partial \Omega}  e^{C P^* (\delta q) (z)} \, d \sigma (z).   \]
By Fubini's theorem and the fact that $P(1) =1$, the left side is just $\int_{\Omega} u_0 q \, dx$.  Hence (\ref{u0upperestimate}) implies $u_0 \in L^1 (\Omega, q \, dx)$.  By Lemma \ref{u1finite} and (\ref{u0u1int}), we obtain $u_1 \in L^1 (\Omega, \, dx)$ and (\ref{u1L1ineq}) holds.  Hence $u_1 < \infty$ a.e.  Then Lemma \ref{finiteimpliessoln2} shows that $u_1$ is a very weak solution of (\ref{dirichlet}).  

Next we prove the pointwise estimate (\ref{pointwiseestu1}).  Since for all $x \in \Omega$ we have $\int_{\partial \Omega} P(x,z) \, d \sigma (z) =1$, it follows  
\begin{equation*}
\begin{aligned}
 u_1 (x) & = 1 + \mathcal{G} q (x) = 1 + \sum_{j=1}^{\infty} \int_{\Omega} G_j (x,y) q(y) \, dy \\ & = \int_{\partial \Omega} \left ( P(x,z) + \sum_{j=1}^{\infty} \int_{\Omega} G_j (x,y)P(y,z) q(y) \, dy  \right )d \sigma (z) \\ 
&  
= \int_{\partial \Omega} \sum_{j=0}^{\infty} T^j ( P(\cdot, z)) (x) \, d \sigma (z).  
\end{aligned}
\end{equation*}

The following estimates of the Poisson kernel are well known (see \cite{CZ}): there exist constants $c=c(\Omega)$, $C=C(\Omega)$ so that, for $x\in  \Omega$ and $ z \in \partial \Omega$:  
\begin{equation}\label{poisson-2} 
\frac{c \,\delta(x)}{|x-z|^n} \le P(x,z)\le \frac{C \, \delta(x)}{|x-z|^n}. 
\end{equation} 

Fix $z \in \partial \Omega$ for the moment.  We claim that $m(x) = P(x,z)$ is a modifier for $K(x,y)=G(x,y)$.  To see this fact, define a quasi-metric $d$ on $\overline{\Omega}$, for $n \ge 3$, by: 
 \[  d(x,y) = |x-y|^{n-2} \, [ |x-y|^2 + \delta(x)^2 + \delta(y)^2 ], \quad  x, y \in \overline{\Omega}.  \]
Notice that, for $z \in \partial \Omega$, we have  $d(x,z) \approx |x-z|^n$ since $|x-z|\ge \delta(x)$ and $\delta(z)=0$. Hence by (\ref{poisson-2}),
\[ m(x) \approx \delta(x)  / d(x, z),  \qquad x \in \Omega. \] 
Using  (\ref{green-smootha}) together with the preceding inequalities, we estimate the modified kernel $\tilde{K}$:  
 \begin{equation}\label{eq-mod} 
  \tilde{K}(x,y) = \frac{G(x,y)}{m(x) \cdot m(y)} \approx  \frac {d(x,z) \cdot d(y,z)}{d(x,y)}. 
  \end{equation} 
By Lemma~\ref{hansen}, $\tilde{K}$ is a quasi-metric kernel on  $\overline{\Omega}\setminus\{z\}$, and hence on  $\Omega$. Notice that all the constants of equivalence depend only on $\Omega$, but not on $z$. 

Similarly, for $n=2$, we  invoke  (\ref{green-smoothb}) to define a quasi-metric on $\overline{\Omega}$ 
using an extension  by continuity of the quasi-metric originally defined on $\Omega$ by 
 \[  d(x,y) = \delta(x) \delta(y) \left [\ln \left(1+ \frac{\delta(x) \delta(y)}{|x-y|^2} \right) \right]^{-1}, \quad  x, y \in \Omega.  \] 
 In other words, for $x \in \Omega$ and $z \in \partial \Omega$, we set  
  \begin{equation*}
  \begin{aligned}
 d(x, z) & = \lim_{y \to z, \, y \in \Omega} d(x, y) \\& = \lim_{y \to z, \, \delta(y)\to 0} \delta(x) \delta(y) \left [\ln \left(1+ \frac{\delta(x) \delta(y)}{|x-y|^2} \right) \right]^{-1}
 = |x-z|^2.  
   \end{aligned}
     \end{equation*}
 The same formula will be used if both $x, z \in \partial \Omega$, so that 
 \[  d(x, z) = |x-z|^2 \quad  \mbox{for all}  \, \, \, x \in \overline{\Omega}, \, \, z \in \partial{\Omega}.   \]
 Clearly, the extended function $d$ satisfies the quasi-triangle inequality on $\overline{\Omega}$. Moreover, 
for $z \in \partial \Omega$, we have  
by (\ref{poisson-2}),
\[ m(x) =P(x, z)\approx \delta(x)  / d(x, z)   \quad  \mbox{for all}  \, \, \, x  \in \Omega . \] 
By Lemma~\ref{hansen} the modified kernel $\tilde{K}(x, y)=\frac{G(x,y)}{m(x)m(y)}$  is a quasi-metric kernel on  $\Omega$, since it satisfies  (\ref{eq-mod}) as in the case 
$n \ge 3$.

Applying (\ref{newu0lowest}) and (\ref{newu0uppest}) to estimate $\sum_{j=0}^{\infty} T^j m$, we obtain: 
\[  c_1 \, P(x,z)  \,  e^{c_2 \int_\Omega G(x,y) \frac{P(y,z)}{P(x,z)} q(y) \, dy}  \le  \sum_{j=0}^{\infty} T^j ( P(\cdot, z)) (x)  \]
\[ \leq C_2 \, P(x,z) \,  e^{C_3 \int_\Omega G(x,y) \frac{P(y,z)}{P(x,z)} q(y) \, dy},   \]
where the constants do not depend on $x\in \Omega$ and  $z\in \partial \Omega$.   Substituting into the expression for $u_1$ above, we obtain (\ref{pointwiseestu1}) as well as the lower estimate
\begin{equation}\label{u1ptwiselow}
 u_1 (x) \geq c_1 \int_{\partial \Omega} e^{c_2 \int_{\Omega} G(x,y) \frac{P(y,z)}{P(x,z)} q(y) \, dy    } P(x,z) \, d \sigma (z) .
\end{equation}

For the converse, suppose $u$ is a positive very weak solution of (\ref{dirichlet}).  By the remarks after Definition \ref{defveryweak2}, $u$ satisfies the integral equation $u = 1 + G(qu) = 1 + Tu$, for $T$ defined by (\ref{operator-tf}).  Hence $0 <u< \infty$ a.e., and $Tu \leq u$.  By Schur's test, $\Vert T \Vert_{L^2 (\Omega, q \, dx) \rightarrow L^2 (\Omega, q \, dx)} \leq 1$.  By Lemma \ref{leastconst}, inequality (\ref{impliesnormcond2}) holds with $\beta =1$.

Since $u_1 = 1 + \mathcal{G} q$ is the minimal positive very weak solution of (\ref{dirichlet}), we have $\mathcal{G} q < u_1 \leq u$, hence $\mathcal{G}q < \infty$ a.e.  By Lemma \ref{lemma-weak}, $q \in L^1 (\Omega, \delta \, dx)$ and by Lemma \ref{finiteimpliessoln}, $\mathcal{G}q \in L^1 (\Omega, \, dx)$.  Hence $u_1 = 1 + \mathcal{G} q \in L^1 (\Omega, dx)$.  By Lemma \ref{u1finite}, $u_0 \in L^1 (\Omega, q \, dx)$.  

Let $z \in \partial \Omega$.  We claim that there exists $c_2>0$ depending only on $\Omega$ such that 
\begin{equation} \label{Poissonineq2}
c_1 e^{c P^* (\delta q) (z)} \leq \int_{\Omega} P(y,z) u_0 (y) q(y) \, dy + c_2 ,
\end{equation} 
where $c_1$ and $c$ are the constants from (\ref{u0lowerestimate}).  By the same exhaustion process that was used in the forward direction, it is sufficient to prove (\ref{Poissonineq2}) under the assumption that $q$ has compact support in $\Omega$.  Under that assumption, we have that $\delta $ is bounded above, and below away from $0$, on the support of $q$, so $\delta q \in L^1 (\Omega, dx)$.  Let $\{ x_j \}_{j=1}^{\infty}$ be a sequence of points in $\Omega$ which converge to $z$ in the normal direction. By Lemma \ref{convlemma} with $\phi=\delta$, 
\[ c_1 e^{c P^* (\delta q) (z) } = e^{c \int_{\Omega} P(y,z) \delta (y) q(y) \, dy} = \lim_{j \rightarrow \infty}c_1 e^{c G(\delta q) (x_j) / \delta (x_j) } . \]
By Theorem \ref{u0thm}, $u_0$ satisfies the estimate in (\ref{u0lowerestimate}). Hence
\[ c_1 e^{c G(\delta q) (x_j) / \delta (x_j) } \leq \frac{u_0 (x_j)}{\delta (x_j)}  =\frac{G(u_0 q)(x_j)}{\delta (x_j)} +  \frac{G1 (x_j)}{\delta (x_j)} \leq \frac{G(u_0 q)(x_j)}{\delta (x_j)} + c_2, \]
by (\ref{G1est}).  Because $u_0 \in L^1 (\Omega, q \, dx)$, taking the limit and applying Lemma \ref{convlemma} with $\phi = u_0$ gives 
(\ref{Poissonineq2}).  

Integrating (\ref{Poissonineq2}) over $\partial \Omega$, applying Fubini's theorem, and using the fact that $P1=1$, we obtain
\begin{equation*}
\begin{aligned}
 c_1 \int_{\partial \Omega}  e^{c P^* (\delta q) (z)} \, d \sigma(z) & \leq \int_{\partial \Omega} \left(  \int_{\Omega} P(y,z) u_0 (y) q(y) \, dy + c_2  \right) \, d\sigma (z) \\  & = \int_{\Omega} u_0 q \, dx + c_2 |\partial \Omega|  < \infty , 
 \end{aligned}
 \end{equation*}
since $u_0 \in L^1 (\Omega, q\, dx)$.  By Lemma \ref{u1finite} and the minimality of $u_1$, we have $\int_{\Omega} u_0 q \, dx \leq \int_{\Omega} u_1 \, dx \leq \int_{\Omega} u \, dx$, which establishes (\ref{u1L1ineqconv}).  

Now (\ref{pointwiseestu}) follows from (\ref{u1ptwiselow}), since $u \geq u_1$.
\end{proofof}

\begin{proofof}{Corollary \ref{carl}}.
By the John-Nirenberg theorem, $e^{\beta P^* (\delta q)}$ is integrable on $\partial \Omega$, for $\beta$ less than a multiple of the reciprocal of the BMO norm of $P^* (\delta q)$.  Hence if (A) holds for $\epsilon_1$ small enough, then $\ds  \int_{\partial \Omega} e^{C P^* (\delta q)} \, d\sigma  < \infty$, and the conclusions follow from Theorem \ref{nonlinthm}.  

By a standard theorem (see e.g., \cite{PV}, \cite{G}, p. 229), $P^* (\delta q) \in $ BMO$(\partial \Omega)$ with BMO norm bounded by a multiple of the Carleson norm of $\delta q \, dx$.   Therefore (B) for $\epsilon_2$ sufficiently small implies (A).
\end{proofof}

Condition (B) above actually yields (A) with every $\chi_E \, \delta q$ in place of $\delta q$, for any measurable $E \subset \Omega$, and the converse is also true  
(see \cite{PV}). 

We now turn to the case of $\Omega = \R^n$.  For $0< \alpha <n$, let $I_{\alpha}= (-\triangle)^{-\alpha/2}$ denote the Riesz potential defined by
\[ I_\alpha f (x) = c_{n, \alpha} \int_{\R^n} |x-y|^{\alpha-n} f(y) \, dy,    \]
for some constant $c_{n, \alpha} >0$.  If $f \ge 0$, then the 
Riesz potential $I_\alpha f (x)$ is finite a.e. in $\R^n$ if and only if 
\begin{equation}\label{riesz-cond}
 \int_{\R^n} \frac{f(y) \, dy}{(|y|+1)^{n-\alpha}} < +\infty.
 \end{equation}
Otherwise  $I_\alpha f \equiv +\infty$ on $\R^n$ (\cite{L}, Sec. I.3).  If (\ref{riesz-cond}) holds, then 
\[ \lim \inf_{x \rightarrow \infty} I_{\alpha} f (x) =0 .  \]
The kernel of $I_2$ is the Green's function $G(x,y)= c_{n}  |x-y|^{2-n}$ and the Green's operator $G$ coincides with $I_2$.  Define the iterates $G_j$ and $\mathcal{G}$ by (\ref{defGj}) and (\ref{defGreenfcnSchr}).   

We consider positive solutions $u$ to the  Schr\"{o}dinger equation 
 \begin{equation}\label{riesz-eq}
  -\triangle u  = q u + f \quad \text{in} \, \, \Omega,
 \end{equation}
where $q \ge 0$ is a given non-negative potential and $f\ge 0$ is a function such that 
\begin{equation}\label{condfonRn}
  \int_{\R^n} \frac{f(y) \, dy}{ (1+|y|)^{n-2}} < +\infty. 
\end{equation}
Equation (\ref{riesz-eq}) is understood in the distributional sense. Equivalently (see \cite{L}, Sec. I.5), $u \in L^1_{loc}(\R^n)$, $u \ge 0$, is a solution to (\ref{riesz-eq}) if 
\[  \int_{\R^n} \frac{u(y) \, q(y) \, dy}{ (1+|y|)^{n-2} } < +\infty,\]
and 
  \begin{equation}\label{riesz-inteq}
 u = I_2 (qu) + I_2 f + c \quad \text{a.e.} , 
  \end{equation}
where $c$ is a non-negative constant and $\lim \inf_{x \rightarrow \infty} u(x) = c$.  

Since $f=1$ does not satisfy (\ref{condfonRn}), we do not obtain conditions for the solvability of (\ref{u0equation}) on $\R^n$.  On a bounded domain, the results for (\ref{u0equation}) in Theorem \ref{u0thm} were used to obtain our results in Theorem \ref{nonlinthm} for (\ref{dirichlet}).  Nevertheless we obtain results for (\ref{dirichlet}) on $\R^n$.  

We first note that Lemma \ref{leastconst} holds for $\Omega = \R^n$.  Define the operator $T$ by (\ref{operator-tf}) with $\Omega = \R^n$.  Define the homogeneous Sobolev space $L^{1, 2}_0(\R^n)$ to be the closure of  $C^\infty_0(\R^n)$ with respect to the norm $||f||_{L^{1, 2} (\R^n)} = ||(-\triangle)^{1/2} f||_{L^2(\R^n)}$.  The dual of $L^{1, 2}_0(\R^n)$ is isometrically isomorphic to $L^{-1, 2}(\R^n)$ defined via the norm $||f||_{L^{-1, 2} (\R^n)} = ||(-\triangle)^{-1/2} f||_{L^2(\R^n)}$.  Because of the semi-group property $I_{\alpha/2} * I_{\alpha/2}=I_\alpha$ of the Riesz kernels, we have, for $f \ge 0$ (or if $f$ is a finite signed measure),  
\[ ||f||^2_{L^{-1, 2} (\R^n)} = \int_{\R^n} (I_1 f)^2 \, dx = \int_{\R^n} f I_2 f \, dx = \int_{\R^n} f Gf \, dx, \]
which is the analogue of (\ref{dual-Sob11}).  With this result, the proof of Lemma \ref{leastconst} carries over to $\R^n$ and we obtain that $||T||_{L^2(\R^n, q \, dx) \to L^2(\R^n, q \, dx)}=\beta^2$, where $\beta$ is the least constant in (\ref{trace-1}).

\begin{proofof}{Theorem \ref{riesz-est}}.
First suppose (\ref{trace-1}) holds for some $\beta \in (0,1)$, and (\ref{finite-int}) holds. Then the equation $-\triangle u = qu$ with $\lim \inf_{x \rightarrow \infty} u(x) = 1$ is equivalent to $u = I_2 (qu) + 1 = T(u) +1$, by (\ref{riesz-inteq}) with $f=0$ and $c=1$.  By the analogue of Lemma \ref{leastconst} for $\R^n$ just noted, the operator $T$ has norm less than 1 on $L^2 (\R^n , q \, dx)$.  Since the Riesz kernel $G(x,y)$ is quasi-metric, Theorem 3.1 in \cite{FNV} (i.e, Corollary 3.5 in \cite{FNV}, or (\ref{newu0uppest}) with $m=1$) states that 
\[
1 + \sum_{j=1}^{\infty} T^j 1 \leq e^{C T1} ,
\]
 where $C$ depends only on $n$ and $\beta$.  Note that 
 \[
 T^j 1 (x) = \int_{\R^n} K_j (x,y)  q(y) \, dy = 
\int_{\R^n} G_j (x,y) q(y) \, dy = G_j q (x) , 
\] 
since $K_j (x,y)= G_j (x,y)$ by (\ref{defngreenpot}) and (\ref{defGj}).  Hence 
\[ u_1 = 1 + \mathcal{G} (q) = 1 + \sum_{j=1}^{\infty} T^j 1 \leq e^{C T1} = e^{C I_2 q},\] 
so (\ref{upper-estRn}) holds.  By (\ref{finite-int}), $I_2 q< \infty$ a.e., so $u_1$ defines a positive solution to $-\triangle u = qu$ with $\lim \inf_{x \rightarrow \infty} u(x) =1$.  

Conversely, suppose $u$ is a nonnegative solution of (\ref{eqnRn}), or equivalently, $u = I_2 (qu) + 1 = Tu + 1$.  Then $1 \leq u < \infty$ a.e., so by Schur's test we have $\Vert T \Vert_{L^2(\R^n, q \, dx) \to L^2(\R^n, q \, dx)}\leq 1$, which, we have seen, implies (\ref{trace-1}) with $\beta =1$.  By iteration of the identity $u=Tu+1$, we see that $u \geq u_1$, so $u_1$ is minimal among positive solutions.  Applying the lower estimate from Theorem 3.1 in \cite{FNV} (i.e., (\ref{newu0lowest}) with $m=1$), we have 
\[ u \geq u_1 \geq e^{cT1} = e^{c I_2 q} ,  \]
where $c$ depends only on $n$, so (\ref{lower-estRn}) holds.  Since $u< \infty$ a.e., we conclude tht $I_2 q < \infty$ a.e., so (\ref{finite-int}) holds.  
\end{proofof}

\bgp 

\section{Nonlinear Equations with Quadratic Growth in the Gradient}\label{riccati}

Let $\Omega \subset \R^n$ be a bounded $C^2$ domain and let $q \in L^1_{loc} (\Omega)$ with $q \geq 0$.  
The definition of very weak solutions of (\ref{nonlineareqn-1}) is consistent with Definitions \ref{defveryweaksoln}, \ref{defveryweak}, and Remark \ref{Remarkmeasure}. A good reference to very weak solutions of elliptic equations  is \cite{MV}. 

\begin{Def}\label{defveryweakricatti}  Let $q\in L^1 (\Omega, \delta \, dx)$.  A function $v \in L^1 (\Omega, dx) $ such that $\int_{\Omega} |\nabla v|^2 \delta \, dx < \infty$ is a very weak solution of (\ref{nonlineareqn-1}) if 
\[- \int_{\Omega} v \triangle h \, dx = \int_{\Omega}  |\nabla v|^2 h \, dx + \int_{\Omega} h \, q \, d x , \,\,\, \mbox{for all} \,\,\, h \in C^2_0 (\overline{\Omega}) .\]
\end{Def}

Lemma \ref{integeqnequiv} and Corollary \ref{ident} show that if $v$ is a very weak solution of (\ref{nonlineareqn-1}) then $v$ satisfies the integral equation 
\begin{equation} \label{integralformmeasure}
v = G (|\nabla v|^2 + q) \,\, \hbox{a.e.},
\end{equation}
and if $q \geq 0, v < \infty $ a.e. and $v$ satisfies (\ref{integralformmeasure}), then $v \in L^1 (\Omega, \, dx)$, $\int_{\Omega} |\nabla v|^2 \delta \, dx < \infty$, and $v$ is a very weak solution of (\ref{nonlineareqn-1}).

Corresponding formally to (\ref{nonlineareqn-1}) under the substitution $v= \log u$ is equation 
(\ref{dirichlet}).  However the precise relation between very weak solutions to (\ref{dirichlet}) and (\ref{nonlineareqn-1}) is not as simple as it might appear, as shown by the next example which was first noted by Ferone and Murat in \cite{FM2}. 
  
\begin{Rem} \label{nonequiv} Even for the case $q =0$, there is a very weak solution $v$ of (\ref{nonlineareqn-1}) such that $u=e^v$ is not a very weak solution of (\ref{dirichlet}).  Let $v (x) = \log \left (1+ G(x, x_0)\right)$, where $x_0\in \Omega$ is a fixed pole.  Then standard arguments show that $v$ is a very weak solution of $- \triangle v = |\nabla v|^2$ on $\Omega$ with $v = 0$ on $\partial \Omega$.  However, $u=1+ G(x, x_0)$ satisfies $-\Delta u = \delta_{x_0}$ in $\Omega$, so that $u$ is not a very weak solution of (\ref{dirichlet}). 
\end{Rem}

We will see that if $u_1$ is the minimal positive very weak solution of (\ref{dirichlet}), then $v = \log u_1$ is a very weak solution of (\ref{nonlineareqn-1}).  However, in general, if $v$ is a very weak solution to (\ref{nonlineareqn-1}) then $u=e^v$ is only a supersolution to (\ref{dirichlet}), which is enough to prove Theorem \ref{riccatithm}.

\begin{proofof}{Theorem \ref{riccatithm}}.  First suppose that (\ref{impliesnormcond2}) holds for some $\beta \in (0,1)$, and (\ref{balayagecond}) holds.   By Theorem~\ref{nonlinthm}, the Schr\"odinger equation (\ref{dirichlet}) has a positive very weak solution 
$ u(x)= 1 + \mathcal{G} q$.  (This solution $u$ was called $u_1$ in the statement of the theorem; we call it $u$ in the proof to avoid ambiguity with a sequence $\{ u_k \}_{k=1}^{\infty}$ which will be defined later.) Then $u \in L^1 (\Omega, dx)$ and $u$ satisfies the integral equation $u = 1 + G(q u)$.  Therefore $u:  \Omega \to [1, +\infty]$  is defined everywhere as a positive superharmonic function in $\Omega$ and hence  is quasi-continuous; moreover,   $\text{cap}(\{u=+\infty\})=0$ (see 
\cite{AG}).  By Remark \ref{Remarkmeasure}, 
 $u \in  W^{1,p}_{loc}(\Omega)$ when $p< \frac{n}{n-1}$. We remark that actually, as shown in \cite{JMV},  $u \in W^{1,2}_{loc}(\Omega)$, 
but the proof of this stronger property is somewhat involved, and it will not be used below. 

Define $d \mu = -\triangle u = qu\, dx$, where $ q u \in L^1_{loc}(\Omega)$. 
Let $v=\log u$. Then $0\leq  v < +\infty$-a.e., $v$ is superharmonic in $\Omega$ by Jensen's inequality, and 
 $v  \in W^{1,2}_{loc}(\Omega)$ (see \cite{HKM}, Theorem 7.48; \cite{MZ}, Sec. 2.2). We claim that  
 \begin{equation}\label{ric-eq-1}
- \triangle v = |\nabla v|^2  + q \quad \mbox{in} \, \, \, \, D'(\Omega).
\end{equation}
To prove (\ref{ric-eq-1}), we will apply the integration by parts formula 
 \begin{equation}\label{by-parts}
\int_\Omega g \, d \rho = - \langle g, \Delta r \rangle=  \int_\Omega \nabla g \cdot \nabla r \, dx,
\end{equation}
where $g\in  W^{1,2}(\Omega)$ is compactly supported and quasi-continuous in $\Omega$, and $\rho = -\Delta r$ where $r \in W^{1,2}_{loc}(\Omega)$ is superharmonic (see, e.g.,  \cite{MZ}, Theorem 2.39 and Lemma 2.33).  This proof would simplify if we could apply (\ref{by-parts}) with $g = h/u, \rho = \mu$, and $r=u$, for $h \in C^{\infty}_0 (\Omega)$.  However, we do not have $r \in W^{1,2}_{loc}(\Omega)$, so we require an approximation argument.  For $k \in \N$, let 
\[ u_k = \min (u, \, e^k), \quad v_k=\min (v, \, k), \quad \mbox{and} \quad  \mu_k = - \triangle u_k.\]
Clearly $u_k$ and $v_k$ are superharmonic, hence $\mu_k $ is a positive measure.  Moreover, $u_k$ and $v_k$ belong to $W^{1,2}_{loc}(\Omega)\bigcap L^\infty(\Omega)$ (see \cite{HKM}, Corollary 7.20). 

Let $h \in C^{\infty}_0 (\Omega)$.  We apply (\ref{by-parts}) with $g=  h/u_k, \rho=\mu_k$, and 
$r=u_k$. Note that $u_k\ge 1$, $g $  is compactly supported since $h$ is, and $ g \in W^{1,2} (\Omega)$ since $u_k \in W^{1,2}_{loc} (\Omega)$.  Then by (\ref{by-parts}),
\begin{equation*}
\begin{aligned}
\int_\Omega \frac {h}{u_k} \, d \mu_k & =  \int_{\Omega} \nabla \left(\frac {h}{u_k}\right) \cdot \nabla u_k \, dx
=  \int_\Omega \frac {\nabla h}{u_k}  \cdot \nabla u_k \, dx - \int_\Omega \frac {|\nabla u_k|^2}{u_k^2} 
h \, dx \\ 
& =  \int_\Omega \nabla h  \cdot \nabla v_k \, dx - \int_\Omega |\nabla v_k|^2 \,  h \, dx.  
\end{aligned}
\end{equation*}

Since $u$ is superharmonic, $u$ is lower semi-continuous, so the set $\{ x \in \Omega: u(x) > e^k \} \equiv \{u>e^k\}$ is open, hence the measure $\mu_k = - \Delta u_k$ is supported on the set $\{u \le e^k\}$ where $u=u_k$.  Hence  $u=u_k$ $d \mu_k$-a.e.,  and 
\begin{equation*}
\begin{aligned}
\left  | \int_\Omega \frac h u \, d \mu - \int_{\Omega} \frac{h}{ u_k} \, d \mu_k  \right | & \le e^{-k} \int_{\{u \geq e^k\}} |h| d \mu +  
e^{-k}  \int_{ \{u =  e^k\}} |h|  \, d \mu_k \\ & 
\le e^{-k} \int_{\Omega} |h| d \mu +  
e^{-k}   \int_{\Omega}| h | \, d \mu_k 
\to 0
\end{aligned}
\end{equation*}
as $k \to \infty$.
Hence
\[
\int_\Omega h q \, dx  = \int_\Omega \frac{h}{u} \,\,  qu \, dx = - \int_\Omega \frac{h}{u}  \,\,  \triangle u \, dx =  \int_\Omega \frac{h}{u}  \, d \mu
= \lim_{k \to \infty} \,  \int_\Omega \frac {h}{u_k} \, d \mu_k.  
\]
Notice that $\nabla v_k = \nabla v$ a.e. on $\{ v<k\}$, and $\nabla v_k = 0$ a.e. on $\{ v\ge k\}$ (see \cite{MZ}, Corollary 1.43). Hence, 
\begin{equation*}
\begin{aligned}
\lim_{k \rightarrow \infty} \int_\Omega \nabla h  \cdot \nabla v_k \, dx  &= \int_\Omega \nabla h  \cdot \nabla v \, dx , \\  
\lim_{k \rightarrow \infty} \int_\Omega |\nabla v_k|^2 \,  h \, dx & = \int_\Omega |\nabla v|^2 \,  h \, dx
\end{aligned}
\end{equation*}
by the dominated convergence theorem. Passing to the limit as $k \to \infty$ in the equation above,  we obtain 
\[
\int_\Omega h q\, dx = \int_\Omega \nabla h \cdot \nabla v \, dx - \int_\Omega |\nabla v|^2 \, h \, dx = -\int_{\Omega} v \triangle h \, dx - \int_\Omega |\nabla v|^2 \, h \, dx , 
\]
which justifies equation (\ref{ric-eq-1}). 

The Riesz decomposition theorem states that a superharmonic function $w$ can be written uniquely as $G(-\triangle w) + g$, where $-\triangle w$, understood in the distributional sense, is called the Riesz measure associated with $w$ and $g$ is the greatest harmonic minorant of $w$ (see 
\cite{AG}, Sec. 4.4).  Hence
\begin{equation}\label{integral-form}
v = G(-\triangle v) + g =  G(|\nabla v|^2 +q) + g,
\end{equation}
where $g$ is the greatest harmonic minorant of $v$.  Since $v \geq 0$, a harmonic minorant of $v$ is $0$, so $g \ge 0$.  It follows from (\ref{integral-form}) and  $u = G(uq) + 1$ that 
$$
g \le v=\log u = \log \left (G (u q) + 1\right)\le G (u q). 
$$
Since $G(uq)$ is a Green potential, the greatest harmonic minorant of $G (uq)$ is $0$,  therefore  $g=0$.  

Hence we have $v =  G(|\nabla v|^2 +q) $, which we have noted (see (\ref{integralformmeasure})) is equivalent to $v$ being a very weak solution of (\ref{nonlineareqn-1}).  

Conversely, suppose $v\in W^{1,2}_{loc}(\Omega)$ is a very weak solution of equation (\ref{nonlineareqn-1}), that is, $v = G (|\nabla v|^2 + q)$.  Then $v \geq 0$.  Let  $v_k = \min\, (v, \, k)$ and $\nu_k = -\Delta v_k$, for $k=1,2, \ldots$ . Then   $v_k\in W^{1,2}_{loc}(\Omega)\bigcap L^\infty(\Omega)$ is superharmonic, and 
\begin{equation}\label{just1}
-\Delta v_k = |\nabla v_k|^2 +q \, \chi_{\{v<k\}} +\tilde \nu_k,
 \end{equation}
where $\tilde \nu_k$ is a nonnegative measure in $\Omega$ supported on $\{v=k\}$. 

Let $u = e^v \geq 1$.  Let $u_k=e^{v_k}$ and $\mu_k=-\Delta u_k$. Since $u_k\in W^{1,2}_{loc}(\Omega)\bigcap L^\infty(\Omega)$, 
it is easy to 
see that 
\begin{equation}\label{just2}
\mu_k=-\Delta u_k = -\Delta v_k \, e^{v_k} -|\nabla v_k|^2  \, e^{v_k} \ge 0.  
\end{equation}
Equation (\ref{just2}) is justified by using integration by parts 
(\ref{by-parts}) with  $g=h e^{v_k}$ where  $h \in C^\infty_0(\Omega)$, and 
 $v_k$ in place of $r$: 
\begin{equation*}
\begin{aligned}
\int_\Omega h \, e^{v_k} \, d \nu_k & = 
\int_\Omega \nabla (h \, e^{v_k}) \cdot \nabla v_k \, dx 
\\ & = \int_\Omega  e^{v_k} \, \nabla h \cdot  \nabla v_k  \, dx  + \int_\Omega  h \, |\nabla v_k|^2 \, e^{v_k} \,dx \\ 
& = \int_\Omega \, \nabla h \cdot  \nabla u_k  \, dx  + \int_\Omega  h \, |\nabla v_k|^2 \, e^{v_k} \,dx \\
& 
 =  \int_\Omega h  \, d \mu_k  + \int_\Omega  h \, |\nabla v_k|^2 \, e^{v_k} \,dx. 
\end{aligned}
\end{equation*}
Hence, 
\begin{equation*}
\begin{aligned}
\langle h, \mu_k\rangle  = \int_\Omega \nabla h \cdot  \nabla (e^{v_k}) \, 
 dx 
& = \int_\Omega e^{v_k} \nabla h  \cdot  \nabla v_k \, dx \\ 
 & = \int_\Omega h \, e^{v_k} \, d \nu_k 
- \int_\Omega    h  \, | \nabla v_k |^2 \, e^{v_k}  \,  dx   \\ 
& =\int_\Omega h \,  e^{v_k} \  \chi_{\{v<k\}}  q\, d x + \int_\Omega h \,  e^{v_k} \   d \tilde \nu_k ,
\end{aligned}
\end{equation*}
where in the last expression we used (\ref{just1}). From the preceding estimates it follows that 
$
\langle h, \mu_k\rangle \ge 0$ if $h \ge 0$, and consequently $u_k$ is superharmonic, and 
\begin{equation}\label{just3}
-\Delta u_k \ge q  u_k \, \chi_{\{u_k<e^k\}}. 
\end{equation}

Clearly,  $u=e^v<+\infty$-a.e., and  $u= \lim 
_{k \to +\infty} u_k$ is superharmonic in $\Omega$ as the limit of the 
increasing sequence of superharmonic functions $u_k$. Since $\mu_k \to \mu$ in the sense of measures, where $\mu = - \triangle u$, (\ref{just3}) yields 
\begin{equation}\label{just4}
-\Delta u \ge  qu \quad \text{in} \, \, \Omega 
\end{equation}
 in the sense of measures, where $q u \in L^1_{loc}(\Omega)$. 

It follows from (\ref{just4}) that 
 $\omega = - \Delta u-qu$ is a non-negative measure in $\Omega$,  so by the Riesz decomposition theorem
$$
u = G(-\Delta u) + g =  G(qu) + G \omega+ g \geq G(qu) + g,
$$
where $g$ is the greatest harmonic minorant of $u$. Since $u \ge 1$, i.e., 
$1$ is a harmonic minorant of $u$, it follows that $g \ge 1$, and consequently, 
\begin{equation}\label{iter}
u \ge G(qu ) + 1 = Tu + 1,
\end{equation}
for $T$ defined by (\ref{operator-tf}).  Since $u \ge Tu$, it follows by Schur's test that 
$||T||_{L^2(\Omega, q dx) \to L^2(\Omega, q dx)} \le 1$, and hence 
 (\ref{impliesnormcond2}) 
holds with $\beta =1$ by Lemma~\ref{leastconst}.

Iterating (\ref{iter}) and taking the limit, we see that 
$$
\phi \equiv 1 + \mathcal{G} q = 1 + \sum_{j=1}^{\infty} G_j q = 1 + \sum_{j=1}^{\infty} T^j 1  \le u < +\infty \, \, \text{a.e.},
$$
and 
$$
\phi = G(q \phi ) + 1.
$$
Hence $\phi$ is a positive very weak solution of (\ref{dirichlet}).  Thus (\ref{balayagecond}) holds, by Theorem \ref{nonlinthm} (ii).
 \end{proofof}

\begin{Rem} 1. We remark in conclusion that the main results of this paper 
remain valid for any elliptic operator $\mathcal{L}$ whose Green's function $G^{\mathcal{L}}$ is equivalent to the Green's function $G$ of the 
Laplacian (see \cite{An}). 

2. Our main results also hold for general locally finite Borel measures $\omega$ in $\Omega$ in place of $q \in L^1_{loc}(\Omega)$, with minor adjustments in the proofs. Notice that condition (\ref{impliesnormcond2}) for  $\omega$ in place of $q dx$ implies 
that $\omega$ is absolutely continuous with respect to capacity 
(see \cite{M}), and all solutions considered in this paper are superharmonic, i.e., finite quasi-everywhere in $\Omega$; moreover, they actually lie in $W^{1, 2}_{loc} (\Omega)$ (see \cite{JMV}, Theorem 6.2).

3. Concerning Theorem  \ref{nonlinthm} and Theorem \ref{riccatithm}, 
suppose \eqref{impliesnormcond2} holds with $\beta <1$. Then a  necessary and sufficient condition in order that  $w=u_{1} - 1  \in L^{1, 2}_0(\Omega)$ 
is  $\int_\Omega G q \, q \, dx < \infty$, i.e., $q \in L^{-1,2} (\Omega)$. 
\end{Rem} 

The sufficiency part  of the last statement follows from  the Lax-Milgram Lemma since $-\triangle w = q w + q$
where $q \in L^{-1,2} (\Omega)$; necessity is a consequence of the fact that $w=G(w q+q)$, so that 
\begin{equation*}
\begin{aligned}
\int_\Omega |\nabla w|^2 dx & = \int_\Omega |\nabla G(w q+q)|^2 dx \\
& = \int_\Omega G(w q+q) \, (wq+q) dx \ge \int_\Omega G q \, q dx . 
 \end{aligned}
 \end{equation*}
In particular, if 
\eqref{impliesnormcond2} holds,  and 
$q \in L^1(\Omega)$, then for all $h \in C^\infty_0(\Omega)$,
\begin{equation*}
\begin{aligned}
\left |\int_\Omega h \, q \, dx \right | & \le \Big(\int_\Omega h^2 \, q \, dx\Big)^{1/2} ||q||_{L^1(\Omega)}^{1/2} \\ 
 & \le \beta^{1/2} ||\nabla h||_{L^2 (\Omega)} 
 ||q||_{L^1(\Omega)}^{1/2}< \infty . 
 \end{aligned}
 \end{equation*}
Hence, by duality $q \in L^{-1,2} (\Omega)$, and 
consequently $w=u_{1} - 1  \in L^{1, 2}_0(\Omega)$,  
for all $n \ge 2$ (see also \cite{AB}, \cite{ADP}). 

This also gives a weak solution $v \in L^{1, 2}_0(\Omega)$ to 
\eqref{nonlineareqn-1} such that $e^v-1 \in L^{1, 2}_0(\Omega)$, as in 
 \cite{FM1},  \cite{FM3}, if  
$q \ge 0$. For  arbitrary distributions $q \in L^{-1,2} (\Omega)$,  
$w=u_{1} - 1  \in L^{1, 2}_0(\Omega)$ and 
$v =\log u_{1} \in L^{1, 2}_0(\Omega)$ is a weak solution to 
\eqref{nonlineareqn-1}, provided $q$ is form bounded with the upper form  bound strictly less than $1$ (see \cite{JMV}).

\end{document}